\DeclareFontFamily{U}{mathx}{\hyphenchar\font45}
\DeclareFontShape{U}{mathx}{m}{n}{
      <5> <6> <7> <8> <9> <10>
      <10.95> <12> <14.4> <17.28> <20.74> <24.88>
      mathx10
      }{}
\DeclareSymbolFont{mathx}{U}{mathx}{m}{n}
\DeclareMathAccent{\widecheck}{0}{mathx}{"71}
\DeclareMathAccent{\wideparen}{0}{mathx}{"75}
\numberwithin{equation}{section}
\theoremstyle{plain}
\newtheorem{theorem}[equation]{Theorem}
\newtheorem{proposition}[equation]{Proposition}
\newtheorem{lemma}[equation]{Lemma}
\newtheorem{corollary}[equation]{Corollary}
\theoremstyle{definition}
\newtheorem{definition}[equation]{Definition}
\theoremstyle{remark}
\newtheorem{remark}[equation]{Remark}
\newcommand{\add}{\operatorname{add}}
\newcommand{\Hom}{\operatorname{Hom}}
\newcommand{\Ho}{\mathrm{Ho}}
\newcommand{\ul}{\underline}
\newcommand{\xrto}{\xrightarrow}
\def\A{\mathcal A}
\def\C{\mathcal C}
\def\D{\mathcal D}
\def\E{\mathcal E}
\def\F{\mathcal F}
\def\G{\mathcal G}
\def\I{\mathcal I}
\def\P{\mathcal P}
\def\U{\mathcal U}
\def\T{\mathcal T}
\def\V{\mathcal V}
\def\X{\mathcal X}
\begin{document}

\title[\tiny{The realization of Verdier quotients as triangulated subfactor categories}]{The realization of Verdier quotients as triangulated subfactor categories}
\author [\tiny{Zhi-Wei Li}] {Zhi-Wei Li}

\date{\today}
\thanks{The author was supported by National Natural Science Foundation
of China (No.s 11671174 and 11571329).}

\email{zhiweili@jsnu.edu.cn}
\subjclass[2010]{18E35, 18E30}
\keywords{triangulated categories; torsion pairs; Verdier quotients}
\maketitle


\maketitle
\begin{center}
\tiny{School of Mathematics and Statistics, \ \ Jiangsu Normal University \\
Xuzhou 221116, Jiangsu, PR China.}
\end{center}
\begin{abstract} We give a method to realize Verdier quotients as triangulated subfactors of an arbitrary triangulated category. We show that Iyama-Yoshino triangulated subfactors are Verdier quotients under suitable conditions.
\end{abstract}

\setcounter{tocdepth}{1}

\section{Introduction}

Over the past decades, triangulated categories have been a unifying theory for many different parts of mathematics. One of the most important tools for triangulated categories is the Verdier quotient. However, one has little control over the morphisms of a Verdier quotient in general. One good way is to realize a Verdier quotient as a triangulated subfactor category like Buchweitz's equivalence between the singularity category and the stable category of Gorenstein projective modules of a Gorenstein ring \cite{Buchweitz87}. Recently, along this way, Iyama and Yang give a Buchweitz type theorem in triangulated categories by using silting reduction in \cite{Iyama-Yang} and Wei gives a more general analogous result in \cite{Wei2015}.

Let $\T$ be an arbitrary triangulated category and $\mathcal{N}$ a triangulated subcategory of $\T$. We use $\T/\mathcal{N}$ to denote the Verdier quotient of $\T$ with respect to $\mathcal{N}$. The aim of this paper is to give a general criterion to realize the Verdier quotient $\T/\mathcal{N}$ as a triangulated subfactor of $\T$ by introducing the notion of an {\it $\mathcal{N}$-localization triple} and the {\it Verdier condition}. This can be seen as an application of the homotopy theory of additive categories with suspensions developed in \cite{ZWLi2, ZWLi3}.
\vskip5pt
\noindent{\bf Theorem} (\ref{thm:localizationppt}) {\it Let $\T$ be a triangulated category and $\mathcal{N}$ a triangulated subcategory of $\T$. Assume that $(\U, \X, \V)$ is an $\mathcal{N}$-localization triple satisfying the Verdier condition. Then

$(\mathrm{i})$ $(\T, [1], \Delta, \V, \U, \X)$ is a pre-partial triangulated category in the sense of \cite[Definition 5.5]{ZWLi3}.
 
 $(\mathrm{ii})$ The subfactor $(\U\cap \V)/[\X]$ has a triangulated structure given by $(\T, [1], \Delta, \V, \U, \X)$.

$(\mathrm{iii})$ The embedding
$$(\C\cap \F)/[\X]\hookrightarrow \T/\mathcal{N}$$
is an equivalence of additive categories and preserves triangulated structures.}

\vskip5pt
\noindent We thank Dong Yang for informing us their recent analogous results in \cite{Iyama-Yang2} and Nakaoka's results in \cite{Nakaoka15}. In fact, we can deduce their results from the our main result directly; see Remark \ref{rem:IYandN}.

We now sketch the contents of the paper. In Section 2, we recall the definition of a special $\X$-monic closed subcategory and the constructions of triangulated subfactor categories given in \cite{ZWLi2}. In Section 3, we recall the notions of a localization triple and a pre-partial triangulated category discussed in \cite{ZWLi3}. In Section 4, we introduce the notion of an $\mathcal{N}$-localization triple for a triangulated subcategory $\mathcal{N}$ and prove our main result. The last Section is devoted to showing that Iyama-Yoshino triangulated subfactors are Verdier quotients under suitable conditions.

\vskip5pt

Throughout this paper, unless otherwise stated, that all subcategories of additive categories considered are full, closed under isomorphisms, all functors between additive categories are assumed to be additive.

\section{Triangulated subfactor categories}
In this section we fix a triangulated category $(\T, [1], \Delta)$ and an additive subcategory $\X$ of $\T$. We recall the notion of a special $\X$-monic closed subcategory and the criterion for the construction of triangulated subfactor categories in \cite{ZWLi2}.

\subsection*{Factor categories of additive categories} Let $\C$ be an additive category and $\X$ an additive subcategory of $\C$. We denote by $\C/[\X]$ the {\it factor} or {\it stable category} of $\C$ modulo $\X$. Recall that the objects of $\C/[\X]$ are the objects of $\C$, and for two objects $A$ and $B$ the Home space $\Hom_{\C/[\X]}(A,B)$ is the quotient $\Hom_\C(A,B)/\X(A,B)$, where $\X(A,B)$ is the subgroup of $\Hom_\C(A,B)$ consisting of those morphisms factorizing through an object in $\X$. Note that the stable category $\C/[\X]$ is an additive category and the canonical functor $\C\to \C/[\X]$ is an additive functor. For a morphism $f\colon A\to B$ in $\C$, we use $\ul{f}$ to denote its image in $\C/[\X]$.

A morphism $f\colon A\to B$ in $\C$ is said to be an {\it $\X$-monic} if the induced morphism $f^*=\Hom_\C(f, \X)\colon\\ \Hom_\C(B, \X)\to \Hom_\C(A, \X)$ is surjective. The notion of an {\it$\X$-epic} is defined dually. Recall that a morphism $f\colon A\to X$ in $\C$ is called an {\it $\X$-preenvelope} if $f$ is an $\X$-monic and $X\in \X$. Dually a morphism $g\colon X\to A$ is called an {\it $\X$-precover} if $g$ is an $\X$-epic and $X\in \X$.

\subsection*{Special $\X$-monic closed subcategories}  An additive subcategory $\C$ of $\T$ is said to be {\it special $\X$-monic closed} \cite[Definition 2.2]{ZWLi2} (note that the condition (b) of \cite[Definition 2.2]{ZWLi2} always holds in this setting) if

 (a)  $\X\subseteq \C$ and for each $A\in \C$, there is a triangle $A\stackrel{i}\to X\to U\to A[1]$ in $\Delta$ with $U\in \C$ and $i$ an $\X$-preenvelope.

 (b)  Assume that $A\stackrel{i}\to X\to U\to A[1]$ is in $\Delta$ with $U\in \C$ and $i$ an $\X$-preenvelope in $\C$. Then for any morphism $f\colon A\to B$ in $\C$, there is a triangle $A\xrightarrow{\left(\begin{smallmatrix}
	i \\
	f
	\end{smallmatrix}\right)}  X\oplus B \to N \to A[1]$ in $\Delta$ with $N\in \C$.

Dually, we can define the notion of a {\it special $\X$-epic closed} subcategory of $\T$.

\subsection*{The triangulation of subfactors}
 Let $\C$ be a special $\X$-monic closed subcategory. For each $A\in \C$, we {\it fix} a triangle $A\xrto{i^A} X^A\xrto{p^A} U^A\xrto{q^A} A[1]$ in $\Delta$ with $i^A$ an $\X$-preenvelope and $U^A\in \C$. Then there is a functor $\Sigma^\X\colon \C/[\X]\to \C/[\X]$ by sending each object $A$ to $U^A$ and each morphism $\ul{f}\colon A\to B$ to $\ul{\kappa}^f$, where the morphism $\kappa^f$ is defined by the following commutative diagram:
\begin{equation*}\label{kappaf}
\xy\xymatrixcolsep{2pc}\xymatrix@C10pt@R10pt{
A\ar[r]^{i^A}\ar[d]_f&X^A\ar[r]^{p^A}\ar[d]^{\sigma^f}&U^A\ar[r]^-{q^A}\ar[d]^{\kappa^f}& A[1]\ar[d]^{f[1]}\\
B\ar[r]^{i^B}&X^B\ar[r]^{p^B}&U^B\ar[r]^-{q^B}&B[1]}
\endxy
\end{equation*}
A {\it standard right triangle} in the subfactor category $\C/[\X]$ is an induced right $\Sigma^\X$-sequence $A\stackrel{\ul{f}}\to B\stackrel{\ul{g}}\to C\xrightarrow{\ul{\xi}(f,g)}\Sigma^\X(A)$ by a triangle $A\stackrel{f}\to B\stackrel{g}\to C\stackrel{h}\to A[1]$ in $\Delta$ with $f$ an $\X$-monic in $\C$ and $C\in \C$ which admits the following commutative diagram
\begin{equation*}\label{xif}
\xy\xymatrixcolsep{2pc}\xymatrix@C12pt@R14pt{
A\ar[r]^f\ar@{=}[d]&B\ar[r]^g\ar[d]^{\delta^f}&C\ar[r]^-h\ar[d]^{\xi(f,g)}& A[1]\ar@{=}[d]\\
A\ar[r]^{i^A}&X^A\ar[r]^{p^A}&U^A\ar[r]^-{q^A}&A[1]}
\endxy
\end{equation*}
 Denote by $\Delta^\X$ the class of right $\Sigma^\X$-sequences (called {\it distinguished right triangles}) in $\C/[\X]$ which are isomorphic to standard right triangles.

  Dually, if $\C$ is special $\X$-epic closed, for each object $A\in \C$, we {\it fix} a triangle $A[-1]\xrto{\nu_A} U_A\xrto{\iota_A} X_A\xrto{\pi_A} A$ in $\Delta$ with $\pi_A$ an $\X$-precover and $U_A\in \C$, then we can define an additive endofunctor $\Omega_\X\colon \C/[\X]\to \C/[\X]$, and the {\it standard left triangles} in $\C/[\X]$. Denote by $\nabla_\X$ the class of left $\Omega_\X$-sequences in $\C/[\X]$ which are isomorphic to standard left triangles.

By \cite[Corollary 4.11]{ZWLi2} and \cite[Theorem 3.2]{ZWLi2}, we have the following result.

 \begin{corollary} \label{thm:main}   $(\mathrm{i})$  If $\C$ is special $\X$-monic closed, then $(\C/[\X], \Sigma^\X, \Delta^\X)$ is a right triangulated category in the sense of \cite[Definition 1.1]{ABM}.
	
	$(\mathrm{ii})$ If $\C$ is special $\X$-epic closed, then $(\C/[\X], \Omega_\X, \nabla_\X)$ is a left triangulated category in the sense of \cite[Definition 2.2]{Beligiannis/Marmaridis94}.
\end{corollary}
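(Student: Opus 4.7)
The plan is to verify the axioms of a right triangulated category (in the sense of \cite[Definition 1.1]{ABM}) for the data $(\C/[\X], \Sigma^\X, \Delta^\X)$, using the special $\X$-monic closed hypothesis as the main tool. Since this verification is carried out in full detail in \cite{ZWLi2}, I would in the end just cite the two results mentioned in the statement; but let me outline the natural route. Part (ii) will be obtained by dualising (i): $\C^{\mathrm{op}}$ is special $\X^{\mathrm{op}}$-monic closed in $\T^{\mathrm{op}}$ exactly when $\C$ is special $\X$-epic closed in $\T$, and under this duality $\Sigma^\X$ becomes $\Omega_\X$ and $\Delta^\X$ becomes $\nabla_\X$.

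First I would check that $\Sigma^\X$ is a well-defined additive endofunctor of $\C/[\X]$. On objects this is unambiguous because a fixed triangle $A\to X^A\to U^A\to A[1]$ has been chosen. On morphisms, the existence of a lift $\sigma^f$ (and hence of $\kappa^f$) follows from $i^A$ being an $\X$-preenvelope, applied to $i^B\circ f\colon A\to X^B$; the independence of $\ul{\kappa}^f$ from the choice of $\sigma^f$ is the standard homotopy-type argument in the stable category, where the difference of two choices factors through $X^A\in \X$ and one chases this through the triangle to conclude the difference of the resulting $\kappa^f$'s factors through something in $\X$. Additivity and functoriality modulo $\X$ then fall out by the usual uniqueness of lifts.

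Next I would verify the axioms for $\Delta^\X$. Closure under isomorphism is baked into the definition. To complete an arbitrary morphism $\ul{f}\colon A\to B$ in $\C/[\X]$ to a distinguished triangle, condition (b) of special $\X$-monic closedness produces a triangle $A\xrightarrow{\left(\begin{smallmatrix} i^A \\ f \end{smallmatrix}\right)} X^A\oplus B\to N\to A[1]$ in $\Delta$ with $N\in \C$; projecting onto the $B$-summand yields an $\X$-monic in $\C$ whose cone is again in $\C$, and this produces a standard right triangle on $\ul{f}$. Rotation of a standard right triangle $A\xrightarrow{\ul{f}} B\xrightarrow{\ul{g}} C\xrightarrow{\ul{\xi}(f,g)} \Sigma^\X A$ is then checked by comparing the given underlying triangle in $\Delta$ with the fixed triangle attached to $B$ and showing that the rotated data is isomorphic in $\C/[\X]$ to a standard triangle; the morphism-completion axiom is verified in a similar vein, using that $\X$-monics can be arranged in commutative squares up to factoring through $\X$.

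The main obstacle, as always for this kind of result, is the octahedral axiom. Given a composable pair of $\X$-monics $f\colon A\to B$ and $g\colon B\to C$ inside $\C$, one must promote the octahedron in $(\T,[1],\Delta)$ on $f,g,gf$ to an octahedron in $(\C/[\X],\Sigma^\X,\Delta^\X)$, all the while keeping bookkeeping control of the connecting morphisms into $\Sigma^\X A$, which are only defined in the stable category. This is precisely what is done by the constructions leading to \cite[Theorem 3.2]{ZWLi2} and \cite[Corollary 4.11]{ZWLi2}; so at this point I would invoke those two results to conclude part (i), and then dualise for part (ii).
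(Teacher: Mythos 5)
Your proposal ultimately coincides with the paper's proof, which simply invokes \cite[Corollary 4.11]{ZWLi2} together with \cite[Theorem 3.2]{ZWLi2}; your surrounding sketch is an (optional) gloss on what those cited results verify, and your dualisation step for part (ii) is standard. One small imprecision in the sketch worth correcting: in the step that completes a morphism $\ul{f}\colon A\to B$ to a distinguished triangle, the $\X$-monic in $\T$ is the full morphism $\left(\begin{smallmatrix} i^A \\ f \end{smallmatrix}\right)\colon A\to X^A\oplus B$ itself (it is $\X$-monic because $i^A$ already is), not its projection onto the $B$-summand, which is just $f$ and need not be $\X$-monic; what one uses is that after passing to $\C/[\X]$ the summand $X^A$ dies, so the image of $\left(\begin{smallmatrix} i^A \\ f \end{smallmatrix}\right)$ becomes isomorphic to $\ul{f}$, and this is how the triangle on $\left(\begin{smallmatrix} i^A \\ f \end{smallmatrix}\right)$ gives a standard right triangle on $\ul{f}$.
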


\subsection*{Frobenius special $\X$-monic closed subcategories}
A special $\X$-monic closed subcategory $\C$ in $\T$ is said to be {\it Frobenius} \cite[Definition 6.1]{ZWLi2} (note that the the condition (iii) of \cite[Definition 6.1]{ZWLi2} holds automatically in this setting) if the following conditions hold.

(a)  For each $A\in \C$, there is a triangle $K\xrto{u} X\xrto{v} A\to K[1]$ in $\Delta$ with $u$ an $\X$-preenvelope in $\C$.

(b) For each $A\in \C$, $p^A$ is an $\X$-precover in the fixed triangle $A\xrto{i^A} X^A\xrto{p^A} U^A\xrto{q^A} A[1]$.
\vskip5pt

Dually, we can define a Frobenius special $\X$-epic closed subcategory of the triangulated category $\T$.

We have the following:

\begin{proposition} \ $($ \cite[Proposition 6.2 ]{ZWLi2}$)$ \label{thm:tricat}  $(\mathrm{i})$  If $\C$ is a Frobenius special $\X$-monic closed subcategory of the triangulated category $(\T, [1], \Delta)$. Then $(\C/[\X], \Sigma^\X, \Delta^\X)$ is a triangulated category.

$(\mathrm{ii})$   If $\C$ is a Frobenius special $\X$-epic closed subcategory of the triangulated category $(\T, [1], \Delta)$. Then $(\C/[\X], \Omega_\X, \nabla_\X)$ is a triangulated category.
\end{proposition}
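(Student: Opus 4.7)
The plan is to derive the proposition from Corollary~\ref{thm:main} by showing that under the Frobenius assumption $\C/[\X]$ carries both a right and a left triangulation, that $\Sigma^\X$ and $\Omega_\X$ are quasi-inverse, and that the two classes of triangles correspond under this equivalence. I sketch~(i); part~(ii) is dual.

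First I would verify that Frobenius condition~(a) promotes $\C$ to a special $\X$-epic closed subcategory. For $A\in\C$, a triangle $K\xrto{u}X\xrto{v}A\to K[1]$ with $u$ an $\X$-preenvelope and $K\in\C$ yields, via the long exact sequence $\Hom(X',-)$ applied to its rotation, that $v$ is an $\X$-precover of $A$; this supplies axiom~(a) of the special $\X$-epic closed definition, while axiom~(b) is forced by the octahedral axiom in $\T$, dualizing the packaging used for the $\X$-monic case. Corollary~\ref{thm:main}(ii) then endows $\C/[\X]$ with a left triangulation $(\Omega_\X,\nabla_\X)$ alongside the right triangulation $(\Sigma^\X,\Delta^\X)$ furnished by part~(i).

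The key step is to produce natural isomorphisms $\Omega_\X\Sigma^\X\cong\id\cong\Sigma^\X\Omega_\X$ on $\C/[\X]$. For $A\in\C$ the fixed triangle $A\xrto{i^A}X^A\xrto{p^A}U^A\xrto{q^A}A[1]$ rotates to $U^A[-1]\to A\xrto{i^A}X^A\xrto{p^A}U^A$, in which, by Frobenius~(b), $p^A$ is an $\X$-precover with $A\in\C$; this is a valid defining triangle for $\Omega_\X$ at $U^A=\Sigma^\X A$, yielding $\Omega_\X\Sigma^\X A\cong A$. Naturality follows because any two defining triangles for $\Omega_\X$ at a given object differ by morphisms factoring through $\X$, and hence agree in the stable category; the dual application of Frobenius~(a) gives $\Sigma^\X\Omega_\X\cong\id$. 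Thus $\Sigma^\X$ is an auto-equivalence of $\C/[\X]$.

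Finally, I would check that the classes $\Delta^\X$ and $\nabla_\X$ correspond under the equivalence $(\Sigma^\X)^{-1}\cong\Omega_\X$. A standard right triangle built from a triangle $A\xrto{f}B\xrto{g}C\xrto{h}A[1]$ in $\Delta$ (with $f$ an $\X$-monic and $C\in\C$) coincides, after rotating once via the canonical isomorphism, with the standard left triangle built from the same underlying triangle of $\T$; this is a direct diagram comparison of the two defining diagrams of the connecting morphisms $\xi(f,g)$ and its left-triangulated counterpart. Granting this, the axioms (TR1)--(TR4) for $(\C/[\X],\Sigma^\X,\Delta^\X)$ follow by combining the rotation axioms of the right and left structures and by lifting octahedral configurations back to $\T$, applying the octahedron in $\Delta$, and descending. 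The main obstacle I foresee is precisely this last bookkeeping: tracking naturality of the identifications and the connecting morphisms through the comparison of right and left standard triangles, so that the two halves of Corollary~\ref{thm:main} glue into a single triangulated structure.
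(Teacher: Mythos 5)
The statement in the paper is a citation of \cite[Proposition 6.2]{ZWLi2}, so there is no in-paper proof to compare against; I am assessing your sketch on its own terms. Your overall strategy is sound and, as far as one can tell, the natural one: derive both a right triangulation $(\Sigma^\X,\Delta^\X)$ and a left triangulation $(\Omega_\X,\nabla_\X)$ on $\C/[\X]$, show the two loop/suspension functors are quasi-inverse using the Frobenius conditions, and then glue. The identification $\Omega_\X\Sigma^\X\cong\id$ via rotation of the fixed triangle $A\to X^A\xrto{p^A}U^A\to A[1]$, using Frobenius~(b) to recognize the rotated triangle as a valid defining triangle for $\Omega_\X$ at $U^A$, is exactly right.

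However, there is a concrete gap at your first step. You assert that, in a triangle $K\xrto{u}X\xrto{v}A\to K[1]$ with $u$ an $\X$-preenvelope and $K\in\C$, the long exact sequence $\Hom_\T(X',-)$ ``yields that $v$ is an $\X$-precover.'' It does not. That sequence reads $\Hom(X',X)\xrto{v_*}\Hom(X',A)\to\Hom(X',K[1])$, so surjectivity of $v_*$ requires the connecting map into $\Hom(X',K[1])$ to vanish; $u$ being an $\X$-preenvelope controls maps \emph{from} $K$ into $\X$, which is a different Hom-group, and gives no control over $\Hom(\X,K[1])$. The precover property of $v$ is not automatic: in the Iyama--Yoshino mutation-pair situation it follows from the extra vanishing $\Hom_\T(\X[-1],\C)=0$, but in the general Frobenius setting one must invoke Frobenius~(b) as well. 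A correct argument compares the triangle from Frobenius~(a) with the fixed triangle $K\xrto{i^K}X^K\xrto{p^K}U^K\to K[1]$: since both $u$ and $i^K$ are $\X$-preenvelopes of $K$, one gets (by the usual Schanuel-type comparison, e.g.\ comparing cones of $\left(\begin{smallmatrix}u\\0\end{smallmatrix}\right)$ and $\left(\begin{smallmatrix}0\\i^K\end{smallmatrix}\right)$) an isomorphism $A\oplus X^K\cong X\oplus U^K$ in $\T$, and then transfers the precover triangle $U^K[-1]\to K\to X^K\xrto{p^K}U^K$ (where $p^K$ \emph{is} a precover, by Frobenius~(b)) to $A$. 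Without this use of~(b), the claim that Frobenius~(a) alone makes $\C$ special $\X$-epic closed is unsupported. The remaining bookkeeping you flag at the end --- matching $\Delta^\X$ with $\nabla_\X$, and deducing rotation and octahedral axioms from the right/left structures --- is plausibly correct but is where the real content lies; as written it is deferred rather than argued.
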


\section{Pre-partial triangulated categories}

In this section we recall the notions of a localization triple and a pre-partial triangulated category in a triangulated category introduced in \cite{ZWLi3}. We fix a triangulated category $(\T, [1], \Delta)$ with an additive subcategory $\X$.
\subsection*{Localization triples}
 Assume $\U, \V$ are two additive subcategories of $\T$ such that $\X\subseteq \U\cap \V$. Let $\A$ be an additive subcategory of $\T$. We denote by $$\U^{\perp_{\A/[\X]}}=\{W\in \A \ | \ \Hom_{\A/[\X]}(\U, W)=0\} \ \mbox{and} \ {^{\perp_{\A/[\X]}}\V}=\{W\in \A \ | \ \Hom_{\A/[\X]}(W, \V)=0\}.$$

\begin{definition} $($\cite[Definition 3.2]{ZWLi3}$)$ \label{htriple}  The triple $(\U, \X, \V)$ is called a {\it localization triple} of $\A$ if $\U, \V\subseteq \A$ and the following conditions hold:

(a) For each $A\in \A$, there is a triangle $A[-1]\to W_A\xrto{\omega_A} Q(A)\xrto{r_A}A$ in $\Delta$ such that $r_A$ is a $\U$-precover and $W_A\in \U^{\perp_{\A/[\X]}}$.

(b) For each $A\in \A$, there is a triangle $A\xrto{j^A} R(A)\xrto{\tau^A} W^A\to A[1]$ in $\Delta$ such that $j^A$ is a $\V$-preenvelope and $W^A\in {^{\perp_{\A/[\X]}}\V}$.

(c) If $A\in \V$, then $Q(A)\in \U\cap \V, W_A\in (\U\cap \V)^{\perp_{\V/[\X]}}$, and if $A\in \U$, then $R(A)\in \U\cap \V, W^A\in {^{\perp_{\U/[\X]}}(\U\cap\V)}$.
\end{definition}

\begin{remark} \label{remark:adjoints}
If $(\U, \X, \V)$ is a localization triple of $\A$, recall that, by \cite[Remark 3.5]{ZWLi3}, then the embedding $\U/[\X]\hookrightarrow \A/[\X]$ has a right adjoint $Q\colon \A/[\X]\to \U/[\X]$ which sends an object $A$ to $Q(A)$ and a morphism $\ul{f}\colon A\to B$ to $\ul{\check{f}}$ which satisfies $r_B\circ \check{f}=f\circ r_A$. Similarly, the embedding $\V/[\X]\to \A/[\X]$ has a left adjoint $R\colon \A/[\X]\to \V/[\X]$ which sends an object $A$ to $R(A)$ and a morphism $\ul{f}\colon A\to B$ to $\ul{\hat{f}}$ which satisfies $\hat{f}\circ j^A=j^B\circ f$.
\end{remark}

 Let $\mathrm{Mor}(\A)$ be the class of morphisms in $\A$. Then we define
\begin{equation*} \label{we}
\mathcal{S}_{[\X]}=\{s\in \mathrm{Mor}(\A) \ | \ RQ(\ul{s})=\ul{\hat{\check{s}}} \ \mbox{is an isomorphism in } \ (\U\cap \V)/[\X]\}.
 \end{equation*}
 By \cite[Theorem 3.8, Remark 3.9]{ZWLi3}$)$, the Gabriel-Zisman localization $\A[\mathcal{S}_{[\X]}^{-1}]$ of $\A$ with respect to $\mathcal{S}_\X$ exists and is equivalent to the subfactor category $(\U\cap\V)/[\X]$. We call $\A[\mathcal{S}_{[\X]}^{-1}]$ the {\it homotopy category} of the localization triple $(\U, \X, \V)$ and denoted by $\Ho(\U, \X, \V)$.

 Assume that $\C$ is a special $\X$-monic closed subcategory of $\T$. Then the subfactor category $\C/\X$ has a right triangulated structure $(\Sigma^\X, \Delta^\X)$ by Theorem \ref{thm:main}. For any morphism $f\colon A\to B$ in $\C$, there is a right triangle $A\xrto{\ul{f}} B\xrto{\ul{g}} C\xrto{\ul{h}}\Sigma^\X(A)$ in $\Delta^\X$.

\begin{definition} \cite[Definition 4.1]{ZWLi3} \label{stabilizing} An additive subcategory $\G$ of $\C$ is said to be {\it stabilizing} in $\C$ if $(\C, \X, \G)$ is a localization triple in $\C$ and for any diagram of the form
 \[\xy\xymatrixcolsep{2pc}\xymatrix@C16pt@R16pt{
A\ar[r]^-{\ul{f}}\ar[d]_{\ul{j}^A}&B\ar[r]\ar[d]^{\ul{j}^B}&C\ar[r]\ar@{.>}[d]^{\ul{t}}& \Sigma^\X(A)\ar[d]^{\Sigma^\X(\ul{j}^A)}\\
R(A)\ar[r]^-{R(\ul{f})}&R(B)\ar[r]&D\ar[r]&\Sigma^\X(R(A))}
\endxy\]
where the rows are right triangles in $\Delta^\X$, there is a morphism $\ul{t}\colon C\to D$ such that the whole diagram commutative and $R(\ul{t})$ is an isomorphism in $\G/\X$. Where $R\colon \C/\X\to \G/\X$ is a left adjoint of the embedding $E^\C\colon \G/\X\hookrightarrow \C/\X$ as constructed in Remark \ref{remark:adjoints}.
\end{definition}

Dually, if $\F$ is a special $\X$-epic closed subcategory of $\T$, then the subfactor category $\F/\X$ has a left triangulated structure $(\Omega_\X, \nabla_\X)$. We say that an additive subcategory $\G$ of $\F$ is {\it stabilizing}, if $(\G, \X, \F)$ is a localization triple in $\F$ and for any diagram of the form
\[\xy\xymatrixcolsep{2pc}\xymatrix@C16pt@R16pt{
\Omega_\X(Q(B))\ar[r] \ar[d]_{\Omega(\ul{r}_B)} & K\ar[r]\ar@{.>}[d]^{\ul{s}}&Q(A)\ar[r]^-{Q(\ul{f})}\ar[d]^-{\ul{r}_A}& Q(B)\ar[d]^{\ul{r}_B}\\
\Omega_\X(B)\ar[r]&L\ar[r]&A\ar[r]^-{\ul{f}}&B}
\endxy\]
where the rows are the distinguished left triangles in $\nabla_\X$, there is a morphism $\ul{s}\colon K\to L$ such that the whole diagram commutative and $Q(\ul{s})$ is an isomorphism in $\G/\X$. Where $Q\colon \F/\X\to \G/\X$ is a right adjoint of the embedding $E_\F\colon \G/\X\hookrightarrow \F/\X$ as constructed by Remark \ref{remark:adjoints}.

\begin{definition} (\cite[Lemma 5.9, Definition 5.5]{ZWLi3}) Let $(\T, [1], \Delta)$ be a triangulated category. Assume that $\X, \C, \F$ are three additive subcategories of $\T$. Then the six-tuple $(\T, [1], \Delta, \F, \C, \X)$ is called a {\it pre-partial triangulated category} if $\C$ is special $\X$-monic closed, $\F$ is special $\X$-epic closed and $\C\cap \F$ is stabilizing in both $\C$ and $\F$.
\end{definition}

If $(\T, [1], \Delta, \F, \C, \X)$ is a pre-partial triangulated category, then we have two endofunctors $G^\X=R\circ \Sigma^\X, E^\C$ and $H_\X=Q\circ \Omega_\X\circ E_\F$ of $(\C\cap \F)/[\X]$. We call the right $G^\X$-sequence $A\xrto{\ul{f}} B\xrto{R(\ul{g})} R(C)\xrto{R(\ul{h})} G^\X(A)$ in $(\C\cap \F)/[\X]$ a {\it standard right triangle} if  $A\xrto{\ul{f}} B\xrto{\ul{g}} C\xrto{\ul{h}}\Sigma^\X(A)$ is a distinguished right triangle in $\Delta^\X$. We use $\Delta^\X(\C\cap \F)$ to denote the class of right $G^\X$-sequences which are isomorphic to standard right triangles in $(\C\cap \F)/[\X]$. Dually, we call the left $H_\X$-sequence $H_\X(B)\xrto{Q(\ul{v})} Q(K)\xrto{Q(\ul{u})} A\xrto{Q(\ul{f})} B$ a {\it standard left triangle} in $(\C\cap \F)/[\X]$ if there is a left triangle $\Omega_\X(B)\xrto{\ul{v}} K\xrto{\ul{u}} A\xrto{\ul{f}} B$ in $\nabla_\X$. We use $\nabla_\X(\C\cap \F)$ to denote the class of the left $H_\X$-sequences which are isomorphic to standard left triangles in $(\C\cap \F)/[\X]$.

\begin{theorem}$($ \cite[Theorem 5.7 ]{ZWLi3}$)$\label{thm:pretricat} If $(\T, [1], \Delta, \F, \C, \X)$ is a pre-partial triangulated category, then  $( (\C\cap \F)/[\X],
G^\X, H_\X,  \Delta^\X(\C\cap \F), \nabla_\X(\C\cap \F))$ is a pre-triangulated category in the sense of \cite[Definition II.1.1]{Beligiannis/Reiten07}.
\end{theorem}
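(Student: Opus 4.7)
The plan is to assemble the pre-triangulated structure on $(\C\cap\F)/[\X]$ from the one-sided structures on $\C/[\X]$ and $\F/[\X]$ supplied by Corollary \ref{thm:main}, using the two stabilizing conditions to descend them coherently to the intersection. First I would establish the right triangulated structure: by Corollary \ref{thm:main}(i), $(\C/[\X], \Sigma^\X, \Delta^\X)$ is right triangulated, and since $R\colon \C/[\X] \to (\C\cap\F)/[\X]$ is a left adjoint to $E^\C$, applying $R$ to a standard right triangle in $\Delta^\X$ yields a candidate in $\Delta^\X(\C\cap\F)$ with shift $G^\X$. The stabilizing hypothesis for $\C\cap\F$ inside $\C$ (Definition \ref{stabilizing}) supplies exactly the filler morphisms $\underline{t}$ whose images under $R$ are isomorphisms, and this is what is needed to transport axioms (TR1)--(TR4) across $R$. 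Dually, the stabilizing condition for $\C\cap\F$ inside $\F$ yields a left triangulated structure $(H_\X, \nabla_\X(\C\cap\F))$ by running the argument inside $(\F/[\X], \Omega_\X, \nabla_\X)$ and pushing through the right adjoint $Q$.

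Next one needs an adjunction $G^\X \dashv H_\X$, required for a pre-triangulated category in the sense of \cite[Definition II.1.1]{Beligiannis/Reiten07}. Using $R \dashv E^\C$ and $E_\F \dashv Q$, for $A, B \in (\C\cap\F)/[\X]$ we have
\[
\Hom(G^\X A, B) \cong \Hom_{\C/[\X]}(\Sigma^\X E^\C A, E^\C B), \qquad \Hom(A, H_\X B) \cong \Hom_{\F/[\X]}(E_\F A, \Omega_\X E_\F B).
\]
Both sides can be computed by applying $\Hom(-, E^\C B)$ or $\Hom(E_\F A, -)$ to the fixed defining triangles $A \xrto{i^A} X^A \to \Sigma^\X A \to A[1]$ and $\Omega_\X B[-1] \to X_B \xrto{\pi_B} B$, and using the vanishing of $\Hom$ against the objects $W^A, W_B$ built into the definition of a localization triple together with the fact that $\X\subseteq \C\cap\F$. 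The resulting comparison is natural in both variables and produces the sought isomorphism $\Hom(G^\X A, B) \cong \Hom(A, H_\X B)$.

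Finally, the compatibility data relating right and left triangles is obtained from the unit and counit of this adjunction: given a standard right triangle $A \xrto{\ul f} B \xrto{R(\ul g)} R(C) \xrto{R(\ul h)} G^\X A$, applying $H_\X$ to the last term and using the counit produces a standard left triangle $H_\X R(C) \to A \to B \to R(C)$ which one must show is isomorphic, via natural morphisms built from $j^?$ and $r_?$, to one arising directly from the left triangulated structure. The main obstacle will be exactly this verification of the Beligiannis--Reiten compatibility squares: it requires a simultaneous invocation of both stabilizing conditions, careful tracking of the fixed triangles chosen in defining $\Sigma^\X, \Omega_\X, R, Q$, and naturality of the comparison morphism between the two descriptions of $\Hom$-spaces established in the previous step. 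Once this is in place for standard triangles, closure under isomorphism in $\Delta^\X(\C\cap\F)$ and $\nabla_\X(\C\cap\F)$ extends the compatibility to the full classes.
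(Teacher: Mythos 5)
The paper does not actually prove this statement: Theorem \ref{thm:pretricat} is imported verbatim from \cite[Theorem 5.7]{ZWLi3} with no argument given, so a direct comparison is not possible. The only trace of the cited proof visible in the present paper is in the proof of Theorem \ref{thm:localizationppt}, where it is recalled that the unit of the adjoint pair $(G^\X, H_\X)$ is constructed \emph{explicitly} as $-Q(\ul{u})$ from a commuting square of triangles comparing $A\to X^A\to \Sigma^\X(A)$ with $G^\X(A)[-1]\to\Omega_\X(G^\X(A))\to X_{G^\X(A)}$; that is, the adjunction is produced by writing down unit and counit, not by abstractly identifying the two $\Hom$-spaces.

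Your sketch is therefore not comparable to a paper proof, but it does have concrete gaps that would have to be filled before it could serve as one. First, the claim that $\Hom_{\C/[\X]}(\Sigma^\X E^\C A, E^\C B)$ and $\Hom_{\F/[\X]}(E_\F A, \Omega_\X E_\F B)$ ``can be computed by applying $\Hom$ to the fixed defining triangles'' is problematic: those triangles live in $\T$, and the resulting long exact sequences are for $\Hom_\T$, which do not automatically descend to exact sequences of $\Hom$-groups in $\C/[\X]$, $\F/[\X]$, or $(\C\cap\F)/[\X]$; some explicit lifting-through-$\X$ argument is needed. Second, you invoke ``vanishing of $\Hom$ against $W^A,W_B$,'' but $W^A$ and $W_B$ do not appear in the two triangles you propose to apply $\Hom$ to — they come from the localization-triple axioms and enter at a different stage. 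Third, your defining left triangle is mis-stated: it is $B[-1]\to\Omega_\X(B)\to X_B\xrto{\pi_B} B$, not $\Omega_\X B[-1]\to X_B\to B$; this matters because the shift placement is precisely what makes $\Sigma^\X$ and $\Omega_\X$ not trivially adjoint via $[1]\dashv[-1]$. Finally, the real content of the stabilizing hypothesis in Definition \ref{stabilizing} is that the fillers $\ul{t}$ become isomorphisms \emph{after} applying $R$ (resp.\ $Q$); you assert this transports the axioms ``across $R$'' but do not indicate how this interacts with the base-change constructions needed to verify the Beligiannis--Reiten compatibility data, which you yourself flag as the main obstacle but leave unaddressed.
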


\section{Verdier quotients vs triangulated subfactors}
In this section, we fix a triangulated category $(\T, [1], \Delta)$ and a triangulated subcategory $\mathcal{N}$ of $\T$. We introduce the notion of an $\mathcal{N}$-localization triple and prove our main result.

\subsection*{$\mathcal{N}$-localization triples}

\begin{definition} \label{defn:nlocaltrip}  A localization triple $(\U, \X, \V)$ in $\T$ is called an {\it $\mathcal{N}$-localization triple} if $\X=\U\cap \V\cap \mathcal{N}$ and $\U^{\perp_{\T/[\X]}}, {^{\perp_{\T/[\X]}}\V}\subseteq \mathcal{N}$.

\end{definition}

An $\mathcal{N}$-localization triple $(\U, \X, \V)$ is said to satisfy the {\it Verdier condition} if for any triangle $A\xrto{s} B\to N\to A[1]$ in $\Delta$ with $A, B\in \U\cap \V, N\in \mathcal{N}$, then $\ul{s}$ is an isomorphism in $(\U\cap \V)/[\X]$.

\begin{lemma} \label{lem:verdiercondition} $(\mathrm{i})$ If $(\U, \X, \V)$ is an $\mathcal{N}$-localization triple such that $\U\cap \V$ is Frobenius special $\X$-monic closed, then it satisfies the Verdier condition.

$(\mathrm{ii})$  If $(\U, \X, \V)$ is an $\mathcal{N}$-localization triple such that $\U\cap \V$ is Frobenius special $\X$-epic closed, then it satisfies the Verdier condition.
\end{lemma}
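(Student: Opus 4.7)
The plan is to prove (i); part (ii) will follow by a dual argument. Given a triangle $A \xrightarrow{s} B \xrightarrow{t} N \xrightarrow{u} A[1]$ in $\Delta$ with $A, B \in \U\cap\V$ and $N\in\mathcal{N}$, my goal is to produce a distinguished right triangle in $\Delta^\X$ of the form $A \xrightarrow{\underline{s}} B \to N' \to \Sigma^\X A$ with $N'\in\X$. Since the Frobenius hypothesis makes $((\U\cap\V)/[\X], \Sigma^\X, \Delta^\X)$ a genuine triangulated category by Proposition~\ref{thm:tricat}, a distinguished triangle with zero third term forces the first map to be an isomorphism, which is exactly the Verdier condition.

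First I would take the fixed triangle $A \xrightarrow{i^A} X^A \xrightarrow{p^A} U^A \to A[1]$ and invoke axiom (b) of the special $\X$-monic closed structure with the morphism $s$ to obtain a triangle
\[ A \xrightarrow{\binom{i^A}{s}} X^A \oplus B \to N' \to A[1] \]
in $\Delta$ with $N'\in\U\cap\V$. The map $\binom{i^A}{s}$ is automatically an $\X$-monic, since any $h\colon A\to X$ with $X\in\X$ factors through $i^A$ and hence through $\binom{i^A}{s}$ via the projection $(a,0)$; so this is a standard right triangle in $\Delta^\X$. In the subfactor $(\U\cap\V)/[\X]$ the split projection $(0,1)\colon X^A\oplus B\to B$ is an isomorphism because its cone $X^A$ lies in $\X$, and composing it with $\binom{i^A}{s}$ recovers $s$. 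Transporting the standard triangle along this isomorphism (using that $\Delta^\X$ is closed under isomorphism) rewrites it as $A \xrightarrow{\underline{s}} B \to N' \to \Sigma^\X A$.

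It remains to show $N'\in\X$. For this I would apply the octahedral axiom to the composition $A \xrightarrow{\binom{i^A}{s}} X^A \oplus B \xrightarrow{(0,1)} B$; the cone of the split projection is $X^A[1]$, so the octahedral yields a triangle $X^A \to N' \to N \to X^A[1]$ in $\Delta$. Because $\X = \U\cap\V\cap\mathcal{N}$ forces $X^A\in\mathcal{N}$, and $\mathcal{N}$ is triangulated (hence closed under extensions), we deduce $N'\in\mathcal{N}$; combined with $N'\in\U\cap\V$ this gives $N'\in\X$, i.e.\ $N' = 0$ in the subfactor. Part (ii) follows by the dual recipe: use the fixed $\X$-precover $X_B\xrightarrow{\pi_B} B$, form the $\X$-epic $(s,\pi_B)\colon A\oplus X_B \to B$ via the dual of axiom (b), and apply the octahedral to $A \hookrightarrow A\oplus X_B \xrightarrow{(s,\pi_B)} B = s$ to put the cocone of the resulting standard left triangle in $\mathcal{N}\cap\U\cap\V = \X$. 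No substantive obstacle is anticipated; the one delicate point is tracking the octahedral cone correctly as $X^A[1]$ (resp.\ $X_B[-1]$) so that the extension-closure of $\mathcal{N}$ combined with the defining identity $\X = \U\cap\V\cap\mathcal{N}$ can do the decisive work.
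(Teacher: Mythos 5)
Your proof is correct and follows essentially the same route as the paper's: both produce the triangle $A \to X^A\oplus B \to N' \to A[1]$ via axiom (b) of special $\X$-monic closedness, use the octahedral/cobase-change to exhibit $X^A \to N' \to N \to X^A[1]$ so that extension-closure of $\mathcal{N}$ plus $\X = \U\cap\V\cap\mathcal{N}$ forces $N'\in\X$, and then read off a distinguished right triangle $A \xrightarrow{\underline{s}} B \to 0 \to \Sigma^\X A$. Your note that the Frobenius hypothesis is what licenses the final step (a triangle with zero cone has an isomorphism as first map) makes explicit a point the paper leaves tacit.
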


\begin{proof} (i) Assume that we have a triangle $A\stackrel{s}\to B\to N\to A[1]$ in $\Delta$ such that $A, B\in \U\cap \V$ and $N\in \mathcal{N}$. Since $\U\cap \V$ is special $\X$-monic closed, there is a triangle $A\xrto{i^A} X^A\to U^A\to A[1]$ in $\Delta$ with $X^A\in \X$ and $U^A\in \U\cap \V$. By the cobase change of triangles in $\Delta$, there is a commutative diagram of triangles:
\[\xy\xymatrixcolsep{2pc}\xymatrix@C12pt@R12pt{
A\ar[r]^s\ar[d]_{i^A}&B\ar[d]\ar[r]&N\ar[r]\ar@{=}[d]&A[1]\ar[d]\\
X^A\ar[r]\ar[d]&B'\ar[r]\ar[d]&N\ar[r]&X^A[1]\\
U^A\ar@{=}[r]\ar[d]&U^A\ar[d]& &\\
A[1]\ar[r]&B[1]& &}
\endxy\]
Since $\U\cap \V$ is special $\X$-monic closed and $\mathcal{N}$ is closed under extensions we have $B'\in \U\cap \V\cap \mathcal{N}=\X$. Thus the triangle $A\xrightarrow{\left(\begin{smallmatrix}
i^A\\
s
\end{smallmatrix}\right)} X^A\oplus B\to B'\to A[1]$ induces a standard triangle $A\stackrel{\ul{s}}\to B\to 0\to \Sigma^\X(A)$ in $(\U\cap \V)/[\X]$. Thus $\ul{s}$ is an isomorphism.

The statement (ii) can be proved dually. \end{proof}

Recall that a pair $(\C, \D)$ of additive subcategories of $\T$ is called a {\it torsion pair} (also called a {\it torsion theory} in \cite[Definition 2.2]{Iyama-Yoshino}) if $\Hom_\T(\C, \D)=0$ and for each $T\in \T$, there is a triangle $C\to T\to D\to C[1]$ in $\Delta$ with $C\in \C$ and $D\in \D$.

\begin{lemma} \label{lem:stabilizing}  Let $(\U, \U^{\perp})$ and $(^{\perp}\V, \V)$ be torsion pairs in $\T$. Assume that $\X=\U\cap \V\cap \mathcal{N}$ is closed under direct summands, $\U^{\perp}[-1]\subseteq \V\cap \mathcal{N}$ and $ {^{\perp}\V}[1]\subseteq \U\cap \mathcal{N}$. Then

$(\mathrm{i})$ $(\U,\X,\V)$ is an $\mathcal{N}$-localization triple.

$(\mathrm{ii})$ If ${^{\perp}\V}[1]=\U\cap \mathcal{N} $, then $(\U,\X,\V)$ satisfies the Verdier condition.

$(\mathrm{iii})$ If $\U^{\perp}[-1]=\V\cap \mathcal{N} $, then $(\U,\X,\V)$ satisfies the Verdier condition.
\end{lemma}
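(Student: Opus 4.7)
The plan is to verify each clause of Definitions \ref{htriple} and \ref{defn:nlocaltrip} in turn, using the two torsion pairs to furnish the required triangles and extension-closure of $\U$, $\V$, $\mathcal{N}$ to control memberships; parts (ii) and (iii) are then small octahedral refinements that exploit the equality half of the extra hypothesis to kill an appropriate connecting map.

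For (i), the $(\U,\U^{\perp})$-torsion triangle of $A$ rotates to $A[-1]\to W_A\to Q(A)\to A$ with $W_A\in\U^{\perp}[-1]\subseteq\V\cap\mathcal{N}$ and $r_A$ a $\U$-precover by $\Hom(\U,\U^{\perp})=0$, giving Definition \ref{htriple}(a); (b) is dual. For (c), when $A\in\V$ both $W_A$ and $A$ lie in $\V$, so $Q(A)\in\V$ by extension-closure, hence $Q(A)\in\U\cap\V$; dually for $A\in\U$. For the quotient-orthogonality $W_A\in\U^{\perp_{\T/[\X]}}$, any $f:U\to W_A$ with $U\in\U$ is annihilated on the ${^{\perp}\V}$-part of the $(^{\perp}\V,\V)$-triangle of $U$ (since $W_A\in\V$), so factors through the $\V$-part; a second pass through the $(\U,\U^{\perp})$-triangle of $W_A$, whose $\U$-part lies in $\V\cap\mathcal{N}$ by $\U^{\perp}[-1]\subseteq\V\cap\mathcal{N}$, lands this factorisation in $\U\cap\V\cap\mathcal{N}=\X$; dually for $W^A$. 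For the inclusion $\U^{\perp_{\T/[\X]}}\subseteq\mathcal{N}$: given $Y\in\U^{\perp_{\T/[\X]}}$ factor $q_Y=\beta\alpha$ through some $S\in\X$; the $\U$-precover property provides $\gamma:S\to Q(Y)$ with $q_Y\gamma=\beta$, whence $1_{Q(Y)}=\gamma\alpha+\kappa\eta$ with $\kappa\eta$ factoring through $W_Y\in\V\cap\mathcal{N}$, exhibiting $Q(Y)$ as a direct summand of $S\oplus W_Y\in\V\cap\mathcal{N}$. Summand-closure of $\V$ places $Q(Y)\in\U\cap\V$, and summand-closure of $\X$ applied to the resulting split decomposition forces $Q(Y)\in\X$, so $Y\in\mathcal{N}$ by the torsion triangle.

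For (ii), given $A\xrto{s}B\xrto{\pi}N\xrto{h}A[1]$ with $A,B\in\U\cap\V$ and $N\in\mathcal{N}$, the $(\U,\U^{\perp})$-triangle of $N$ has $W_N\in\U^{\perp}\subseteq(\V\cap\mathcal{N})[1]\subseteq\mathcal{N}$, so $Q(N)\in\U\cap\mathcal{N}={^{\perp}\V}[1]$ by the equality hypothesis. Then $Q(N)[-1]\in{^{\perp}\V}$ annihilates $A\in\V$, forcing $h\circ q_N=0$; thus $q_N$ lifts through $\pi$ to some $\phi:Q(N)\to B$. Applying the octahedral axiom to $Q(N)\xrto{\phi}B\xrto{\pi}N$ yields a triangle $A\to B'\to W_N\to A[1]$, and extension-closure of $\V$ and $\mathcal{N}$ together with a parallel analysis of the $\U$-side (using $W_N\in\U^{\perp}$) places $B'\in\U\cap\V\cap\mathcal{N}=\X$. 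Comparing this with the fixed $\X$-monic triangle at $A$ presents $\ul{s}$ as the second arrow of a standard right triangle in $(\U\cap\V)/[\X]$ with third term zero, so $\ul{s}$ is an isomorphism. Part (iii) is the formal dual, working from the $\V$-side and invoking $\U^{\perp}[-1]=\V\cap\mathcal{N}$. The main obstacle will be the summand-bookkeeping in (i) for $\U^{\perp_{\T/[\X]}}\subseteq\mathcal{N}$: since $\mathcal{N}$ is not assumed thick, the conclusion $Q(Y)\in\mathcal{N}$ cannot use summand-closure of $\mathcal{N}$ directly but must be routed through the summand-closure of $\X$ and a careful identification of the complementary piece inside $\U\cap\V\cap\mathcal{N}$.
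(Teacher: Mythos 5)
Your part (i) has the right skeleton (use the two torsion triangles and extension-closure to produce the localization-triple data), but the verification that $W_A\in\U^{\perp_{\T/[\X]}}$ does not work as written. You detour through the $({}^{\perp}\V,\V)$-decomposition of $U$, factoring $f\colon U\to W_A$ through its $\V$-part $D$; but to then factor $D\to W_A$ through $r_{W_A}\colon Q(W_A)\to W_A$ you would need the composite $D\to W_A\to W_{W_A}$ to vanish, and $W_{W_A}\in\U^{\perp}\subseteq\V[1]$, so there is no reason for $\Hom(D,W_{W_A})$ to be zero. The paper's argument is both simpler and correct: $r_{W_A}$ is a $\U$-precover directly from the $(\U,\U^{\perp})$-torsion triangle, so $f$ with $U\in\U$ factors through $Q(W_A)$, and $Q(W_A)\in\U\cap\U^{\perp}[-1]\subseteq\X$ (the rotated triangle shows $Q(W_A)[1]$ is an extension of two $\U^{\perp}$-objects). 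Your closing remarks about routing the inclusion $\U^{\perp_{\T/[\X]}}\subseteq\mathcal{N}$ through summand-closure of $\X$ rather than of $\mathcal{N}$ are apt — the paper's own proof is terse here and really only establishes that the specific objects $W_A$, $W^A$ lie in $\mathcal{N}$ — but that part of your proposal does not rescue the earlier gap.

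For (ii) and (iii) you take a genuinely different route from the paper, but it does not close. You replace $N$ by its $(\U,\U^{\perp})$-decomposition, lift $r_N$ through $\pi$ to $\phi\colon Q(N)\to B$, and run an octahedron on $Q(N)\to B\to N$, arriving at $B'=\mathrm{cone}(\phi)$ and the triangle $A\to B'\to W_N\to A[1]$. The problem is that none of the available triangles place $B'$ in $\X=\U\cap\V\cap\mathcal{N}$. From $A\to B'\to W_N\to A[1]$, extension-closure of $\V$ would require $W_N\in\V$, but $W_N\in\U^{\perp}\subseteq\V[1]$; from $Q(N)\to B\to B'\to Q(N)[1]$, extension-closure of $\U$ would require $Q(N)[1]\in\U$ and of $\mathcal{N}$ would require $B\in\mathcal{N}$, neither of which is available. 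The paper avoids all of this by working on $A$ rather than $N$: set $\I=\U\cap\U^{\perp}[-1]\subseteq\X$, take an $\I$-preenvelope $A\to I$ inside $\U$, and push out along $s$. That cobase change gives two triangles simultaneously — $I\to B'\to N\to I[1]$ (forcing $B'\in\mathcal{N}$ since $I,N\in\mathcal{N}$) and $B\to B'\to U\to B[1]$ with $U\in\U$ (forcing $B'\in\U$), so $B'\in\U\cap\mathcal{N}={}^{\perp}\V[1]$ by the equality hypothesis. Then $\Hom(B',A[1])=0$ splits $A\to I\oplus B\to B'\to A[1]$, exhibiting $B'$ as a summand of $I\oplus B\in\V$, whence $B'\in\X$ and $\ul{s}$ is invertible. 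The paper's use of the $\I$-monic structure on $\U$ is the key mechanism that produces a cone lying in both $\U$ and $\mathcal{N}$ at once; your octahedron does not manufacture such a cone, so the membership bookkeeping stalls.
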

\begin{proof} (i) By the assumption, we only need to prove that Definition \ref{htriple} (b) holds. Since  $(\U, \U^{\perp})$ and $(^{\perp}\V, \V)$ are torsion pairs in $\T$, for all $A\in \T$, there are triangles $A[-1]\to W_A[-1]\to Q(A)\xrto{r_A} A $ with $Q(A)\in \U$ and $W_A\in \U^{\perp}$, and $A \xrto{j^A} R(A)\to W^A[1]\to A[1]$ with  $R(A)\in \V$ and $W^A\in {^{\perp}\V}$. Since $\U^{\perp}[-1]\subseteq \V,  {^{\perp}\V}[1]\subseteq \U$, and $\U, \V$ are closed under extensions, we know that Definition \ref{htriple} (c) holds. Now assume there is a morphism $f\colon C\to G$ with $C\in \U, G\in \U^{\perp}[-1]$, then there is a triangle $G[-1]\to W_G[-1]\to Q(G)\xrto{r_G} G$ with $Q(G)\in \U$ and $W_G\in \U^{\perp}$. Since $\U^{\perp}$ is closed under extensions, we have $Q(G)\in \U\cap \U^{\perp}[-1]\subseteq \X$. Since $r_{G}$ is a $\C$-epic, the morphism $f$ factors through $r_G$, then $\U^{\perp}[-1]\subseteq \U^{\perp_{\T/[\X]}}$. Similarly we can prove that ${^{\perp}\V}[1]\subseteq {^{\perp_{\T/[\X]}}\V}$. From these, we can show that $(\U, \X, \V)$ is a localization triple in $\T$. By the above proof and the assumption we know that $(\U, \X, \V)$ is an $\mathcal{N}$-localization triple.

(ii) Let $\I=\U\cap \U^{\perp}[-1]\subseteq\X$. Then $\U$ is special $\I$-monic closed by \cite[Corollary 4.14 (i)]{ZWLi2}. Assume that we have a triangle $A\stackrel{s}\to B\to N\to A[1]$ in $\Delta$ such that $A, B\in \U\cap \V$ and $N\in \mathcal{N}$. Similarly to the proof of Lemma \ref{lem:verdiercondition} (i), we can get a triangle $A\xrightarrow{\left(\begin{smallmatrix}
i\\
s
\end{smallmatrix}\right)} I\oplus B\to B'\to A[1]$ with $I\in \I$ and $B'\in \U\cap \mathcal{N}={^{\perp}\V}[1]$. Thus $I\oplus B\cong A\oplus B'$ and then $B'\in \U\cap \V\cap \mathcal{N}=\X$. Therefore, $\ul{s}$ is an isomorphism in $(\U\cap \V)/[\X]$. So $(\U, \X, \V)$ satisfies the Verdier condition.

Dually, we can prove (iii).
\end{proof}

\subsection*{The Verdier quotients}
Recall that the Verdier quotient $\T/\mathcal{N}$ is the localization of $\T$ with respect to the class $\mathcal{S}_{\mathcal{N}}$ of morphisms of $\T$ defined by
$$\mathcal{S}_{\mathcal{N}}=\{s\in \mathrm{Mor}(\T) \ |\ \exists \ A\xrto{s}B\to N\to A[1]\in \Delta \ \mbox{with}\  N\in \mathcal{N}\}.$$

\begin{proposition} \label{prop:verdierhc}  Let $(\T, [1], \Delta)$ be a triangulated category and $\mathcal{N}$ a triangulated subcategory of $\T$. Assume that $(\U,\X,\V)$ is an $\mathcal{N}$-localization triple of $\T$ satisfying the Verdier condition. Then $\Ho(\U, \X, \V)=\T/{\mathcal{N}}$.

\end{proposition}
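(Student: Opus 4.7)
The plan is to verify that the two classes of morphisms $\mathcal{S}_{[\X]}$ and $\mathcal{S}_{\mathcal{N}}$ induce the same localization of $\T$, whence the universal property of Gabriel--Zisman localization yields mutually inverse functors between $\Ho(\U,\X,\V)$ and $\T/\mathcal{N}$. Concretely, I would first show $\mathcal{S}_{\mathcal{N}}\subseteq \mathcal{S}_{[\X]}$, then show that every $s\in \mathcal{S}_{[\X]}$ becomes invertible in $\T/\mathcal{N}$, and finally invoke universality.

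For $\mathcal{S}_{\mathcal{N}}\subseteq \mathcal{S}_{[\X]}$, take $s\colon A\to B$ with cone $N\in\mathcal{N}$. Because $r_B$ is a $\U$-precover and $Q(A)\in\U$, the composite $s\circ r_A$ factors strictly as $r_B\circ\check{s}$ for some $\check{s}\colon Q(A)\to Q(B)$; similarly the $\V$-preenvelope $j^{Q(A)}$ supplies a morphism $\hat{\check{s}}\colon RQ(A)\to RQ(B)$ with $\hat{\check{s}}\circ j^{Q(A)}=j^{Q(B)}\circ\check{s}$ strictly in $\T$. The cones of $r_A, r_B$ are shifts of $W_A, W_B\in\U^{\perp_{\T/[\X]}}\subseteq\mathcal{N}$, and the cones of $j^{Q(A)}, j^{Q(B)}$ are $W^{Q(A)}, W^{Q(B)}\in{}^{\perp_{\T/[\X]}}\V\subseteq\mathcal{N}$. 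Two applications of the octahedral axiom then show that the cone of $\check{s}$, and thereafter the cone of $\hat{\check{s}}$, lie in $\mathcal{N}$, using closure of $\mathcal{N}$ under extensions and shifts. Since $Q(A)\in\U$ forces $RQ(A)\in\U\cap\V$ by Definition \ref{htriple}(c), and similarly for $B$, the Verdier condition applies to the triangle erected on $\hat{\check{s}}$ and yields that $\ul{\hat{\check{s}}}$ is an isomorphism in $(\U\cap\V)/[\X]$; i.e., $s\in\mathcal{S}_{[\X]}$.

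For the converse, let $s\in\mathcal{S}_{[\X]}$. Since $\X=\U\cap\V\cap\mathcal{N}\subseteq\mathcal{N}$ and every $X\in\mathcal{N}$ is zero in $\T/\mathcal{N}$ (the morphism $X\to 0$ has cone $X[1]\in\mathcal{N}$ and so belongs to $\mathcal{S}_{\mathcal{N}}$), any morphism factoring through $\X$ dies in $\T/\mathcal{N}$; consequently an isomorphism in $(\U\cap\V)/[\X]$ becomes a genuine isomorphism in $\T/\mathcal{N}$. On the other hand $r_A, r_B, j^{Q(A)}, j^{Q(B)}$ all belong to $\mathcal{S}_{\mathcal{N}}$ (their cones are in $\mathcal{N}$), hence are invertible in $\T/\mathcal{N}$. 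Chasing the commutative squares that define $\check{s}$ and $\hat{\check{s}}$ then identifies $s$ with $\hat{\check{s}}$ in $\T/\mathcal{N}$ up to these isomorphisms, so $s$ is invertible there. Combined with the previous step, this gives mutually inverse universal factorizations $\T/\mathcal{N}\to \Ho(\U,\X,\V)$ and $\Ho(\U,\X,\V)\to \T/\mathcal{N}$.

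The main technical obstacle is the first step: the constructions $\check{s},\hat{\check{s}}$ of Remark \ref{remark:adjoints} are a priori defined only modulo $[\X]$, so one must choose lifts to $\T$ whose defining squares commute on the nose before the octahedral axiom is applicable. Such lifts exist thanks to the precover/preenvelope properties of $r_B$ and $j^{Q(A)}$, but the bookkeeping across two successive octahedra requires attention. The saving grace throughout is the containment $\X\subseteq\mathcal{N}$: any discrepancy ``up to $\X$'' is harmless in $\T/\mathcal{N}$, so identities in the stable category $\T/[\X]$ translate cleanly into genuine identities after passing to the Verdier quotient.
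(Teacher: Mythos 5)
Your proof is correct, and the first half coincides in spirit with the paper's: both pass from $s$ to $\hat{\check{s}}\colon RQ(A)\to RQ(B)$ and apply the Verdier condition there; the paper compresses your two explicit octahedra into a single invocation of the two-out-of-three property for $\mathcal{S}_{\mathcal{N}}$ (which holds for any triangulated subcategory, since cones of compositions fit in a triangle). The genuine divergence is in the converse direction. The paper proves the stronger claim $\mathcal{S}_{[\X]}\subseteq\mathcal{S}_{\mathcal{N}}$: given $s\colon A\to B$ in $\U\cap\V$ with $\ul{s}$ invertible, it produces a split triangle $A\to X\oplus B\to X'\to A[1]$ with $X'\in\X$ and uses the octahedral axiom to show the cone $C$ of $s$ fits in a triangle $X'\to C\to X[1]\to X'[1]$, whence $C\in\mathcal{N}$; thus $\mathcal{S}_{[\X]}=\mathcal{S}_{\mathcal{N}}$ on the nose and the two localizations are literally the same category. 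You instead prove only that $\gamma'\colon\T\to\T/\mathcal{N}$ inverts every morphism of $\mathcal{S}_{[\X]}$ (by passing through $\gamma'(\hat{\check{s}})$ and noting that $\X\subseteq\mathcal{N}$ kills $[\X]$-factorizations), then conclude via the Gabriel--Zisman universal property; this is a priori weaker than $\mathcal{S}_{[\X]}\subseteq\mathcal{S}_{\mathcal{N}}$ but suffices. Your route has the virtue of bypassing the verification that the complement $X'$ actually lies in $\X$, which in the paper's argument implicitly uses that $\X$ is a retract-closed class (otherwise one only gets $X'$ a summand of some object of $\X$, and with $\mathcal{N}$ merely triangulated rather than thick one cannot conclude $X'\in\mathcal{N}$). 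In exchange, the paper's equality $\mathcal{S}_{[\X]}=\mathcal{S}_{\mathcal{N}}$ is a sharper statement and is reused in the proof of Theorem~\ref{thm:localizationppt}, where the identification ``$\mathcal{S}_{\mathcal{N}}$ which is $\mathcal{S}_{[\X]}$'' is invoked directly; if you adopt your version you would need to rephrase that step in terms of inverted morphisms rather than equality of classes.
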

\begin{proof}  By \cite[Theorem 3.8, Remark 3.9]{ZWLi3}$)$, we only need to prove that $\mathcal{S}_{[\X]}=\mathcal{S}_{\mathcal{N}}$. In fact, since $(\U, \X, \V)$ is an $\mathcal{N}$-localization triple in $\T$, for each $A\in \T$, we have two triangles in $\Delta$: $A[-1]\to W_A\xrto{\omega_A} Q(A)\xrto{r_A} A$ and $A\xrto{j^A} R(A)\xrto{\tau^A} W^A\to A[1]$ such that $r_A$ is a $\U$-precover with $W_A\in \U^{\perp_{\T/[\X]}}\subseteq \mathcal{N}$ and $j^A$ a $\V$-preenvelope with $W^A\in {^{\perp_{\T/[\X]}}\V}\subseteq \mathcal{N}$. Thus the morphisms $r_A$ and $j^A$ are in $\mathcal{S}_{\mathcal{N}}$. So for any morphism $s\colon A\to B$ in $\T$, $s\in \mathcal{S}_{\mathcal{N}}$ if and only if $\hat{\check{s}}\colon R(Q(A))\to R(Q(B))$ is in $\mathcal{S}_{\mathcal{N}}$ (which satisfies two out of three property). Similarly, $s\in \mathcal{S}_{[\X]}$ if and only if $\hat{\check{s}}$ is in $\mathcal{S}_{[\X]}$. Thus in order to prove that $\mathcal{S}_\X=\mathcal{S}_{[\mathcal{N}]}$, we may only consider the morphisms in $\U\cap \V$. Assume that we have a morphism $s\colon A\to B$ in $\U\cap \V$, extend it to a triangle $A\stackrel{s}\to B\to C\to A[1]$ in $\Delta$. If $s\in \mathcal{S}_{\mathcal{N}}$, then $C\in \mathcal{N}$, thus by the Verdier condition, we know that $s\in \mathcal{S}_{[\X]}$. Conversely, if $s\in \mathcal{S}_{[\X]}$, i.e. $\ul{s}$ is an isomorphism in $(\U\cap \V)/[\X]$, then there is a morphism $t\colon B\to A$ such that $1_A-ts$ factors through some $X\in \X$, say $1_A-ts=u\circ v$ where $v\colon A\to X$ and $u\colon X\to A$.
So we have a triangle $A\xrto{\left(\begin{smallmatrix}
v \\
s
\end{smallmatrix}\right)}X\oplus B\to X'\xrto{0} A[1]$ in $\Delta$ with $X'\in \X$. By the octahedron axiom, there is a triangle $X'\to C\to X[1]\to X'[1]$ in $\Delta$ and thus $C\in \mathcal{N}$ since $\X\subseteq \mathcal{N}$.
\end{proof}

\begin{corollary} \label{cor:verdiersubfactor} Let $(\T, [1], \Delta)$ be a triangulated category and $\mathcal{N}$ a triangulated subcategory of $\T$. If $(\U, \X, \V)$ is an $\mathcal{N}$-localization triple satisfying the Verdier condition, then we have an equivalence of additive categories $(\U\cap \V)/[\X]\stackrel{\sim}\hookrightarrow \T/\mathcal{N}$.
\end{corollary}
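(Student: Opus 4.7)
The plan is to obtain this corollary as an immediate consequence of Proposition \ref{prop:verdierhc}, combined with the identification of $\Ho(\U,\X,\V)$ as a subfactor of $\T$ that was recorded in Section~3. Specifically, for any localization triple $(\U,\X,\V)$, \cite[Theorem 3.8, Remark 3.9]{ZWLi3} asserts that the Gabriel--Zisman localization $\T[\mathcal{S}_{[\X]}^{-1}]=\Ho(\U,\X,\V)$ exists, and that the canonical composite $\U\cap\V\hookrightarrow\T\to\T[\mathcal{S}_{[\X]}^{-1}]$ descends through $(\U\cap\V)/[\X]$ to give an equivalence of additive categories $(\U\cap\V)/[\X]\stackrel{\sim}\to\Ho(\U,\X,\V)$. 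The descent is automatic because identities of objects of $\X$ lie in $\mathcal{S}_{[\X]}$, so morphisms factoring through $\X$ are inverted in the localization.

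Next I would invoke Proposition \ref{prop:verdierhc}: under the hypothesis that $(\U,\X,\V)$ is an $\mathcal{N}$-localization triple satisfying the Verdier condition, the multiplicative systems agree, $\mathcal{S}_{[\X]}=\mathcal{S}_{\mathcal{N}}$, whence $\Ho(\U,\X,\V)=\T/\mathcal{N}$. Composing with the equivalence of the previous paragraph immediately yields
\[
(\U\cap\V)/[\X]\stackrel{\sim}\longrightarrow\T/\mathcal{N}.
\]

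Finally, it remains to verify that this equivalence is realised by the embedding named in the statement, i.e.\ by the functor induced by the inclusion $\U\cap\V\hookrightarrow\T$ followed by the Verdier quotient $\T\to\T/\mathcal{N}$. This is automatic once the identification $\Ho(\U,\X,\V)=\T/\mathcal{N}$ is in place: since $\X=\U\cap\V\cap\mathcal{N}\subseteq\mathcal{N}$, any morphism of $\U\cap\V$ factoring through an object of $\X$ becomes zero in $\T/\mathcal{N}$, so the composite $\U\cap\V\to\T/\mathcal{N}$ factors uniquely through $(\U\cap\V)/[\X]$, and by construction this factorisation agrees with the equivalence produced above. Consequently there is no substantive obstacle in the corollary itself; the real work has already been shouldered by Proposition \ref{prop:verdierhc}, whose proof uses the Verdier condition together with the octahedron axiom to establish $\mathcal{S}_{[\X]}=\mathcal{S}_{\mathcal{N}}$.
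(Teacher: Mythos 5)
Your proposal is correct and takes exactly the same route as the paper: the paper's proof is a one-line appeal to Proposition~\ref{prop:verdierhc} together with \cite[Theorem 3.8, Remark 3.9]{ZWLi3}, which is precisely the combination you describe, and your extra remarks about the equivalence being realised by the canonical embedding merely make explicit what the cited references already guarantee.
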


\begin{proof} This follows from Theorem \ref{prop:verdierhc} and \cite[Theorem 3.8, Remark 3.9]{ZWLi3}$)$ directly.\end{proof}

\begin{theorem} \label{thm:localizationppt} Let $(\U, \X, \V)$ be a $\mathcal{N}$-localization triple satisfying the Verdier condition. Assume that $\U$ is special $\X$-monic closed and $\V$ is special $\X$-epic closed. Then

$(\mathrm{i})$  $(\T, [1], \Delta, \V, \U, \X)$ is a pre-partial triangulated category such that the induced pre-triangulated structure of $(\U\cap \V)/[\X]$ is a triangulated structure.

$(\mathrm{ii})$ The equivalence $(\U\cap \V)/[\X]\stackrel{\sim}\hookrightarrow \T/\mathcal{N}$ of additive categories preserves triangulated structures.
\end{theorem}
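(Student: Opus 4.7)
The plan is to verify that $(\T, [1], \Delta, \V, \U, \X)$ satisfies the axioms of a pre-partial triangulated category in the sense of Section~3, then exploit the additive equivalence of Corollary~\ref{cor:verdiersubfactor} to upgrade the resulting pre-triangulated structure on $(\U\cap\V)/[\X]$ to a genuine triangulated structure, and finally check that the equivalence with $\T/\mathcal{N}$ is compatible with triangulations.

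\textbf{Pre-partial structure.} The hypotheses that $\U$ is special $\X$-monic closed and $\V$ is special $\X$-epic closed are given, so the task reduces to showing $\U\cap\V$ is stabilizing in both $\U$ and $\V$. For the localization triple $(\U, \X, \U\cap\V)$ in $\U$: when $A\in\U$, Definition~\ref{htriple}(a) is witnessed by the split triangle $0\to A\xrightarrow{\mathrm{id}}A\to 0$, and Definition~\ref{htriple}(b) uses the triangle $A\xrto{j^A} R(A)\to W^A\to A[1]$ coming from the ambient $\mathcal{N}$-localization triple, where condition (c) of Definition~\ref{htriple} guarantees $R(A)\in\U\cap\V$ and $W^A\in {^{\perp_{\U/[\X]}}(\U\cap\V)}$. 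For the diagram-completion property of Definition~\ref{stabilizing}, I would start with a standard right triangle $A\xrto{\ul{f}} B\xrto{\ul{g}} C\xrto{\ul{h}}\Sigma^\X(A)$ lifted from a $\T$-triangle $A\to B\to C\to A[1]$, apply the octahedral axiom to the composite $A\to B\to R(B)$ to produce a morphism $\ul{t}\colon C\to D$ filling the diagram, and use the Verdier condition to conclude that $R(\ul{t})$ is an isomorphism in $(\U\cap\V)/[\X]$, since the comparison cone lies in $\mathcal{N}$ by the vanishing conditions of the localization triple. The dual argument handles the stabilizing condition in $\V$, so Theorem~\ref{thm:pretricat} supplies a pre-triangulated structure on $(\U\cap\V)/[\X]$.

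\textbf{Upgrade and part (ii).} By Corollary~\ref{cor:verdiersubfactor} there is an additive equivalence $(\U\cap\V)/[\X]\simeq \T/\mathcal{N}$; transporting the Verdier triangulated structure gives a triangulated structure on $(\U\cap\V)/[\X]$. The key compatibility is that the pre-partial suspension $G^\X = R\circ\Sigma^\X$ corresponds to $[1]$ under this equivalence: the triangle $A\to X^A\to U^A\to A[1]$ with $X^A\in\X\subseteq\mathcal{N}$ gives $\Sigma^\X(A)=U^A\cong A[1]$ in $\T/\mathcal{N}$, and the morphism $U^A\to R(U^A)$ has cone in ${^{\perp_{\T/[\X]}}\V}\subseteq\mathcal{N}$, so $G^\X(A)\cong A[1]$ in $\T/\mathcal{N}$. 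A dual calculation identifies $H_\X$ with $[-1]$, making $G^\X$ and $H_\X$ quasi-inverse autoequivalences, and showing the pre-triangulated structure of (i) is actually a triangulated one. For (ii), a standard right triangle $A\xrto{\ul{f}} B\xrto{R(\ul{g})} R(C)\xrto{R(\ul{h})} G^\X(A)$ in $(\U\cap\V)/[\X]$ arises from a $\T$-triangle $A\to B\to C\to A[1]$ with $f$ an $\X$-monic and $C\in\U$; under the identifications $C\cong R(C)$ and $G^\X(A)\cong A[1]$ in $\T/\mathcal{N}$, it maps to the image of this $\T$-triangle in $\T/\mathcal{N}$, which is distinguished. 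The dual argument covers standard left triangles, completing the proof.

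\textbf{Main obstacle.} The technical heart is the stabilizing diagram completion: the octahedral construction producing $\ul{t}$ together with the Verdier-condition verification that $R(\ul{t})$ becomes an isomorphism in $(\U\cap\V)/[\X]$. Once this is in place, the remainder is a careful translation of the pre-partial machinery of Theorem~\ref{thm:pretricat} through the equivalence of Corollary~\ref{cor:verdiersubfactor}.
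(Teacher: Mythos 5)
Your overall structure parallels the paper's but diverges in two places, one of which is a genuine (if economical) alternative route and one of which is under-justified.

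For the stabilizing step, the paper does not attempt a direct verification of Definition~\ref{stabilizing}: it first checks that $(\U,\X,\U\cap\V)$ is a localization triple of $\U$, then shows ${^{\perp_{\U/[\X]}}(\U\cap\V)}$ is special $\X$-monic closed (via \cite[Lemma 4.12]{ZWLi2}) so that ${^{\perp_{\U/[\X]}}(\U\cap\V)}/[\X]$ is a right triangulated subcategory, and finally invokes the criterion \cite[Lemma 4.2]{ZWLi3} together with the Verdier condition to conclude $\U\cap\V$ is stabilizing in $\U$. Your ``apply the octahedral axiom to the composite $A\to B\to R(B)$'' is not a complete substitute: the bottom row of the stabilizing diagram is a distinguished right triangle in $\U/[\X]$, i.e.\ only \emph{isomorphic} to the image of a $\T$-triangle, so the octahedron does not directly produce the filler $\ul{t}$ in the factor category, and you would also need to check the commutativity of both squares adjacent to $\ul{t}$. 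If you want a self-contained argument, you should either reconstruct the reduction lemma or carefully lift the bottom triangle to $\Delta$ before octahedrating. As written this is a gap, not just a condensation.

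For the upgrade from pre-triangulated to triangulated, your route is genuinely different from the paper's and works. The paper proves directly that the unit and counit of the adjoint pair $(G^\X, H_\X)$ are isomorphisms: it builds a $3\times 3$ diagram of triangles (via \cite[Proposition 1.1.11]{BBD}) around the unit morphism $u$, observes that its cone lies in $\mathcal{N}$, and concludes $u\in\mathcal{S}_{\mathcal{N}}=\mathcal{S}_{[\X]}$ by Proposition~\ref{prop:verdierhc}. You instead construct the natural isomorphism $E\circ G^\X\cong[1]\circ E$ (which the paper only builds in part (ii)), deduce that $G^\X$ is an autoequivalence because $[1]$ is, and conclude. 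This is legitimate and arguably more economical, since the isomorphism $E\circ G^\X\cong[1]\circ E$ is needed for part (ii) anyway; the paper's approach has the advantage of staying internal to the pre-partial machinery without appealing to the ambient Verdier quotient. One small caveat: your objectwise computation $G^\X(A)\cong A[1]$ in $\T/\mathcal{N}$ needs to be promoted to a natural isomorphism before $G^\X$ can be declared an autoequivalence; the paper handles this by producing the component maps $\gamma'(q^A)\circ\gamma'(j^{\Sigma^\X(A)})^{-1}$ explicitly, and you should say why these are natural in $A$.

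Part (ii) as you sketch it agrees with the paper.
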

\begin{proof} (i) Since $(\U, \X, \V)$ is a localization triple of $\T$, then for each $A\in \U$, there is a triangle $A\xrto{j^A} R(A)\xrto{\tau^A} W^A\to A[1]$ in $\Delta$ such that $j^A$ is a $\V$-preenvelope, $R(A)\in \U\cap\V$ and $W^A\in {^{\perp_{\U/[\X]}}(\U\cap \V)}$. Thus $(\U, \X, \U\cap \V)$ is a localization triple of $\U$. Since ${^{\perp_{\U/[\X]}}(\U\cap \V)}$ is extension closed, we know that ${^{\perp_{\U/[\X]}}(\U\cap \V)}$ is special $\X$-monic closed by \cite[Lemma 4.12]{ZWLi2} and noting that $(\U\cap \V)\cap {^{\perp_{\U/[\X]}}(\U\cap \V)}=\X$. Therefore, ${^{\perp_{\U/[\X]}}(\U\cap \V)}/[\X]$ is a right triangulated subcategory of $\U/[\X]$. So, if $W_1\xrto{r} W_2\to W_3\to W_1[1]$ is a triangle in $\Delta$ with $r$ an $\X$-monic in $\U$ and $W_1\in {^{\perp_{\U/[\X]}}(\U\cap \V)}$, then $W_3\in {^{\perp_{\U/[\X]}}(\U\cap \V)}$ iff $W_2\in {^{\perp_{\U/[\X]}}(\U\cap \V)}$. Moreover, if $A\xrto{f} B\to W\to A[1]$ is a triangle in $\Delta$ with $f$ an $\X$-monic in ${^{\perp_{\U/[\X]}}(\U\cap \V)}$ and $W\in {^{\perp_{\U/[\X]}}(\U\cap \V)}$, then $\ul{f}$ is an isomorphism in $(\U\cap \V)/[\X]$ since $(\U, \X, \V)$ satisfies the Verdier condition and ${^{\perp_{\U/[\X]}}(\U\cap \V)}=\U\cap {^{\perp_{\T/[\X]}}\V}\subseteq \U\cap \mathcal{N}$. Thus $\U\cap \V$ is stabilizing in $\U$ by \cite[Lemma 4.2]{ZWLi3}. Dually, we can prove that $\U\cap \V$ is stabilizing in $\V$. So $(\T, [1], \Delta, \V, \U, \X)$ is a pre-partial triangulated category.

By Theorem \ref{thm:pretricat}, $(\U\cap \V)/[\X]$ admits a pre-triangulated structure $(G^\X, H_\X, \Delta^\X(\U\cap \V), \nabla_\X(\U\cap \V))$. By the proof of \cite[Theorem 5.7]{ZWLi3}, the unit of the adjoint pair $(G^\X, H_\X)$ is given by $-Q(\ul{u})$, where $u$ fits the following diagram of triangles in $\Delta$
\[\xy\xymatrixcolsep{2pc}\xymatrix@C18pt@R16pt{
\Sigma^\X(A)[-1]\ar[r]^-{-q^A[-1]}\ar[d]_{j^{\Sigma^\X(A)}[-1]}&A\ar[r]^-{i^A} \ar[d]^{u}&X^A\ar[r]^-{p^A}\ar[d]^{\delta}& \Sigma^\X(A)\ar[d]^{j^{\Sigma^\X(A)}}\\
G^\X(A)[-1]\ar[r]^-{\nu_{G^\X(A)}}&\Omega_\X(G^\X(A))\ar[r]^-{\iota_{G^\X(A)}}&X_{G^\X(A)}\ar[r]^-{\pi_{G^\X(A)}}& G^\X(A)}
\endxy\]
By \cite[Proposition 1.1.11]{BBD}, the above diagram can be extended to the following diagram of triangles
\[\xy\xymatrixcolsep{2pc}\xymatrix@C10pt@R10pt{
A\ar[r]\ar[d]_{u}&X^A\ar[r]\ar[d]^{\delta}&\Sigma^\X(A)\ar[r]\ar[d]^{j^{\Sigma^\X(A)}}& A[1]\ar[d]^{\mu[1]}\\
\Omega_\X(G^\X(A))\ar[r]\ar[d]&X_{G^\X(A)}\ar[r]\ar[d]& G^\X(A)\ar[r]\ar[d]&\Omega_\X(G^\X(A))[1]\ar[d]\\
L\ar[r]\ar[d] &M\ar[r]\ar[d]&W^{\Sigma^\X(A)}\ar[r]\ar[d]&L[1]\ar[d]\\
A[1]\ar[r]&X^A[1]\ar[r]&\Sigma^\X(A)[1] \ar[r]& A[2]}
\endxy\]
Since $\X\subseteq \mathcal{N}$, $W^{\Sigma^\X(A)}\in {^{\perp_{\U/[\X]}}(\U\cap \V)}=\U\cap{^{\perp_{\T/[\X]}}\V} \subseteq \mathcal{N}$, we know that $L\in \mathcal{N}$. Therefore $u \in \mathcal{S}_{\mathcal{N}}$ which is $\mathcal{S}_{[\X]}$ by the proof of Lemma \ref{prop:verdierhc}. So $Q(\ul{u})$ is an isomorphism.  Similarly, we can prove that the counit of the adjoint pair $(G^\X, H_\X)$ is also an isomorphism. So $G^\X$ is an auto-equivalence of $(\U\cap \V)/[\X]$ and thus $(G^\X, \Delta^\X(\U\cap \V))$ is a triangulated structure.

(ii) By Lemma \ref{lem:verdiercondition} and Corollary \ref{cor:verdiersubfactor}, we know that the embedding $E\colon (\U\cap \V)/[\X]\hookrightarrow \T/\mathcal{N}$ is an equivalence of additive categories. So we only need to prove that $E$ preserves triangles. Applying the triangle functor $\gamma'\colon \T\to \T/\mathcal{N}$ (the localization of $\T$ with respect to $\mathcal{S}_{\mathcal{N}}$) to the triangles $A\to X^A\to \Sigma^\X(A)\xrto{q^A} A[1]$ with $A\in \U$ and $\Sigma^\X(A)\xrto{j^{\Sigma^\X(A)}}R(\Sigma^\X(A))\to W^{\Sigma^\X(A)}\to \Sigma^\X(A)[1]$ with $R(\Sigma^\X(A))\in \U\cap \V, W^{\Sigma^\X(A)}\in {^{\perp_{\T/[\X]}}\V}$,  we get an isomorphism $\gamma'(q^A)\circ \gamma'(j^{\Sigma^\X(A)})^{-1}\colon R(\Sigma^\X(A))\to A[1]$ in $\T/\mathcal{N}$, which defines a natural isomorphism $E\circ G^\X=E\circ R\circ \Sigma^\X\cong [1]\circ E $. Let $A\xrto{\ul{f}} B\xrto{R(\ul{g})} R(C)\xrto{R(\ul{h})} G^\X(A)$ be a triangle in $(\U\cap \V)/[\X]$ which is induced by the left triangle $A\xrto{\ul{f}} B\xrto{\ul{g}} C\xrto{\ul{h}} \Sigma^\X(A)$ in $\U/[\X]$ with $A, B\in \U\cap \V$. Without loss of generality, we may assume that it is induced by the following commutative diagram of triangles in $\Delta$:
\[\xy\xymatrixcolsep{2pc}\xymatrix@C12pt@R12pt{
A\ar[r]^f\ar@{=}[d]&B\ar[d]\ar[r]^g&C\ar[r]\ar[d]^h&A[1]\ar@{=}[d]\\
A\ar[r]&X^A\ar[r]&U^A\ar[r]^{q^A}&A[1]}
\endxy\]
which shows that $A\xrto{\gamma'(f)} B\xrto{\gamma'(g)}C\xrto{\gamma'(q^A\circ h)} A[1]$ is a triangle in $\T/\mathcal{N}$. Thus $A\xrto{\gamma'(f)} B\xrto{\gamma'(R(g))}R(C)\xrto{\gamma'(q^A)\circ \gamma'(j^{\Sigma^\X(A)})^{-1}\circ \gamma'(R(h))} A[1]$ is a triangle in $\T/\mathcal{N}$ (we remind the reader that we have $j^C\circ g=R(g)$ and $R(h)\circ j^C=j^{\Sigma^\X(A)}\circ h$). So $E$ is a triangle functor.

\end{proof}

By the above Theorem and Lemma \ref{lem:verdiercondition}, we have

\begin{corollary} \label{thm:FrobeniusVerdier} Let $(\T, [1], \Delta)$ be a triangulated category and $\mathcal{N}$ a triangulated subcategory of $\T$. If $(\U, \X, \V)$ is an $\mathcal{N}$-localization triple such that $\U\cap \V$ is Frobenius special $\X$-monic or $\X$-epic closed, then there is a triangle equivalence $$(\U\cap \V)/[\X]\stackrel{\sim}\hookrightarrow \T/\mathcal{N}.$$
\end{corollary}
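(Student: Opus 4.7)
The plan is to assemble three pieces of infrastructure already developed in the paper: Lemma \ref{lem:verdiercondition} to unlock the Verdier condition, Corollary \ref{cor:verdiersubfactor} to obtain the additive equivalence, and Proposition \ref{thm:tricat} to equip the subfactor with an honest triangulated structure. Lemma \ref{lem:verdiercondition} says that the Frobenius hypothesis on $\U\cap\V$ (in either the $\X$-monic or the $\X$-epic variant) already forces $(\U,\X,\V)$ to satisfy the Verdier condition; Corollary \ref{cor:verdiersubfactor} then immediately delivers an additive equivalence $E\colon(\U\cap\V)/[\X]\xrightarrow{\sim}\T/\mathcal{N}$. Meanwhile, Proposition \ref{thm:tricat} endows $(\U\cap\V)/[\X]$ with an honest triangulated structure, with suspension $\Sigma^\X$ in the $\X$-monic case and with loop functor $\Omega_\X$ in the $\X$-epic case. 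All that is left is to promote $E$ to a triangle functor.

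I would treat the $\X$-monic case in detail, the $\X$-epic case being dual. The first step is to construct a natural isomorphism $E\circ\Sigma^\X\cong [1]\circ E$. The fixed triangles $A\xrto{i^A} X^A\xrto{p^A} \Sigma^\X(A)\xrto{q^A} A[1]$ in $\Delta$ that present $\Sigma^\X$ on objects all have $X^A\in\X\subseteq\mathcal{N}$, so after localizing at $\mathcal{S}_{\mathcal{N}}$ the middle term becomes zero and $\gamma'(q^A)\colon\Sigma^\X(A)\xrightarrow{\sim} A[1]$ becomes invertible in $\T/\mathcal{N}$; naturality in $A$ follows from the commutative ladder defining $\Sigma^\X$ on morphisms in Section 2.

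The second step is to transport distinguished triangles. Every distinguished right triangle in $(\U\cap\V)/[\X]$ is isomorphic to a standard one $A\xrto{\ul{f}} B\xrto{\ul{g}} C\xrto{\ul{\xi(f,g)}} \Sigma^\X(A)$ induced by an ambient triangle $A\xrto{f}B\xrto{g}C\xrto{h} A[1]$ in $\Delta$ with $A,B,C\in\U\cap\V$ and $f$ an $\X$-monic; this ambient triangle descends under $\gamma'$ directly to a triangle of $\T/\mathcal{N}$. The main subtlety, and the only nontrivial verification, is to check that under the natural isomorphism $\gamma'(q^A)$ the connecting morphism $\gamma'(h)$ really matches $\gamma'(q^A)\circ\gamma'(\xi(f,g))$; this amounts to a one-diagram chase exploiting the commutative square that defines $\xi(f,g)$, essentially identical in structure to the closing argument of the proof of Theorem \ref{thm:localizationppt}(ii). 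Once this matching is in hand, $E$ is a triangle equivalence.
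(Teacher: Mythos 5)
Your proposal is correct and takes a genuinely different route from the paper's. The paper's own proof of this corollary is a one\nobreakdash-line citation: it applies Theorem \ref{thm:localizationppt} together with Lemma \ref{lem:verdiercondition} (the lemma supplies the Verdier condition, the theorem furnishes the triangulated structure via the pre-partial triangulated category machinery, with suspension $G^\X = R\circ\Sigma^\X$, and the triangle-functor property). But Theorem \ref{thm:localizationppt} additionally assumes $\U$ is special $\X$-monic closed and $\V$ is special $\X$-epic closed, and neither of these appears among the corollary's explicit hypotheses nor follows obviously from $\U\cap\V$ being Frobenius; the paper does not address this. Your route sidesteps the issue entirely: you obtain the triangulated structure on $(\U\cap\V)/[\X]$ directly from Proposition \ref{thm:tricat} (suspension $\Sigma^\X$ living on $\U\cap\V$ itself, no detour through $\U/[\X]$ and no $R$), get the additive equivalence from Corollary \ref{cor:verdiersubfactor} via Lemma \ref{lem:verdiercondition}, and then verify triangle-functoriality by hand. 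What the paper's route buys is uniformity with the main theorem; what yours buys is elementariness and independence from the unstated closedness hypotheses on $\U$ and $\V$. One minor remark: what you call the ``main subtlety'' of matching connecting morphisms is in fact immediate, since $q^A\circ\xi(f,g)=h$ is literally the commutativity of the rightmost square of the defining diagram in Section 2, and $\gamma'(q^A)$ is invertible because the rotated triangle $X^A\to \Sigma^\X(A)\xrto{q^A}A[1]\to X^A[1]$ exhibits a cone $X^A[1]\in\mathcal{N}$.
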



\begin{remark} \label{rem:IYandN}(i) Let $\T$ be a triangulated category and $\mathcal{N}$ a thick triangulated subcategory of $\T$ satisfying the following conditions:

$(\mathrm{a})$  there are torsion pairs $(\C, \C^{\perp})$ and $({^{\perp}\D}, \D)$ in $\T$;

$(\mathrm{b})$ $(\C, \D)$ is a torsion pair in $\mathcal{N}$. 

\noindent Let $\X=\C[1]\cap \D$ and $\mathcal{Z}=\C^\perp\cap {^\perp\D}[1]$. Then $({^\perp\D}[1], \X, \C^\perp)$ is an $\mathcal{N}$-localization triple satisfies the Verdier condition by taking $\U={^\perp\D}[1]$ and $\V=\C^\perp$ in Lemma \ref{lem:stabilizing}, so we can get \cite[Theorem 1.1, Theorem 1.2 (a)]{Iyama-Yang2} by Theorem \ref{thm:localizationppt}.

(2) Assume that $\T$ is a triangulated category and there are two torsion pairs $(\C, \D[1])$ and $(\E[-1], \F)$ satisfying the following conditions:

$(\mathrm{a})$  $\E\subseteq \C$, $\D\subseteq \F$ and $\C\cap\D =\E\cap \F$;

$(\mathrm{b})$ there is a thick triangulated subcategory $\mathcal{N}$ such that $ \C\cap \F\cap\mathcal{N}=\C\cap\D, \C\cap \mathcal{N}=\E$ and $\F\cap \mathcal{N}=\D$.

\noindent Then $(\C, \C\cap \D, \F)$ is an $\mathcal{N}$-localization triple satisfies the Verdier condition by taking $\U=\C, \V=\F$ and $\X=\C\cap \D$ in Lemma \ref{lem:stabilizing}, so we can get \cite[Proposition 6.10, Corollary 6.13]{Nakaoka15} by Theorem \ref{thm:localizationppt}.
\end{remark}

\section{Iyama-Yoshino triangulated subfactors} Let $(\T,[1], \Delta)$ be a triangulated category. Let $\X\subseteq C$ be two additive subcategories of $\T$ closed under direct summands. We use $\langle\X\rangle$ to denote the smallest triangulated subcategory of $\T$ containing $\X$.

Recall that $(\C, \C)$ is said to be an {\it $\X$-mutation pair} \cite[Definition 2.5]{Iyama-Yoshino} if $\C$ is extension-closed, $\Hom_\T(\X[-1], \C)=0=\Hom_\T(\C, \X[1])$, and for any object $A\in \C$, there exist triangles $A[-1]\to K_A\to X_A\to A$ and $ A\to X^A\to K^A\to A[1]$ in $\bigtriangleup$ such that $
X_A, X^A\in\X$ and $K_A, K^A\in \C$. In this case, by \cite[Example 6.4 (ii), Proposition 6.5]{ZWLi2}, we know that $\C$ is Frobenius special $\X$-monic closed. Thus we have the following corollary by Corollary \ref{thm:FrobeniusVerdier}.
\begin{corollary} \label{cor:Verdier vs IY} Assume that $(\C, \C)$ forms an $\X$-mutation pair. If there is an $\langle\X\rangle$-localization triple $(\U, \X, \V)$ such that $\U\cap \V=\C$, then there is a triangle equivalence
$$\C/[\X]\stackrel{\sim}\hookrightarrow \T/\langle\X\rangle.$$
\end{corollary}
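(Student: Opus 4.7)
The plan is to reduce the statement directly to Corollary \ref{thm:FrobeniusVerdier}, taking $\mathcal{N}=\langle\X\rangle$. Since $(\U,\X,\V)$ is assumed to be an $\langle\X\rangle$-localization triple, the only missing ingredient is that $\U\cap\V=\C$ is a Frobenius special $\X$-monic (or $\X$-epic) closed subcategory of $\T$.

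To verify this, I would unpack the $\X$-mutation pair hypothesis. From the definition, for every $A\in\C$ there is a triangle $A\xrto{i^A} X^A\to K^A\to A[1]$ in $\Delta$ with $X^A\in\X$ and $K^A\in\C$. The orthogonality $\Hom_\T(\C,\X[1])=0$ forces $i^A$ to be an $\X$-preenvelope (hence $\X$-monic), so condition (a) of special $\X$-monic closedness holds. Condition (b) follows in the usual way: given $f\colon A\to B$ in $\C$, one uses the octahedral axiom applied to the composable morphisms $A\xrightarrow{\left(\begin{smallmatrix} i^A\\ f\end{smallmatrix}\right)} X^A\oplus B$ and to $i^B\colon B\to X^B$ to produce a triangle $A\to X^A\oplus B\to N\to A[1]$ in which $N$ sits in an extension of objects of $\C$, and $\C$ is extension-closed. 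Symmetrically, the dual triangles $A[-1]\to K_A\to X_A\to A$ with $X_A\in\X$, $K_A\in\C$ show that $\C$ is special $\X$-epic closed; moreover, the second triangle in the mutation pair axiom exhibits $p^A\colon X^A\to K^A$ as an $\X$-precover (using $\Hom_\T(\X[-1],\C)=0$), which is precisely the extra axiom (b) in the definition of a Frobenius special $\X$-monic closed subcategory. This is exactly the content of \cite[Example 6.4(ii), Proposition 6.5]{ZWLi2} cited above, so I would simply quote it.

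With $\C=\U\cap\V$ Frobenius special $\X$-monic closed and $(\U,\X,\V)$ an $\langle\X\rangle$-localization triple, Corollary \ref{thm:FrobeniusVerdier} applies verbatim and yields the triangle equivalence
\[
\C/[\X]\;\stackrel{\sim}{\hookrightarrow}\;\T/\langle\X\rangle.
\]
The main obstacle is nothing more than matching definitions: one must be confident that the mutation pair axioms genuinely deliver both halves of the Frobenius condition and the $\X$-(pre)envelope/precover properties, but once the prior references are invoked this is immediate. There is no substantive new argument; the corollary is a packaging of Corollary \ref{thm:FrobeniusVerdier} in the mutation-pair language of Iyama--Yoshino.
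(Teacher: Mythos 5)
Your proof is correct and matches the paper's own argument: both reduce the statement to Corollary~\ref{thm:FrobeniusVerdier} with $\mathcal{N}=\langle\X\rangle$, after observing that the mutation-pair axioms make $\C=\U\cap\V$ a Frobenius special $\X$-monic closed subcategory (the paper, like you, ultimately defers this verification to \cite[Example 6.4(ii), Proposition 6.5]{ZWLi2}). Your extra unpacking of how $\Hom_\T(\C,\X[1])=0$ gives the $\X$-preenvelope and $\Hom_\T(\X[-1],\C)=0$ gives the $\X$-precover is accurate but not a different route, merely the details the paper outsources to the citation.
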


\subsection*{A theorem of Iyama-Yang} Let $(\T, [1], \Delta)$ be a triangulated category. Let $\P$ be a {\it presilting} (also called {\it semi-selforthogonal}) subcategory (i.e., $\Hom_\T(\P, \P[i])=0$ for all $i>0$) of $\T$. Assume that $\P$ is closed under direct summands and satisfies the following two conditions.

(P1) $\P$ is covariantly finite in ${^{\perp}\P[>0]}$ and contravariantly finite in ${\P[<0]^{\perp}}$.

(P2) For any $A\in \T$, $\Hom_\T(A, \P[i])=0=\Hom_\T(\P, A[i])$ for $i\gg 0$.

\noindent  Let $\mathcal{Z}=(^{\perp}\P[>0]\cap \P[<0]^{\perp})$. Then $(\mathcal{Z}, \mathcal{Z})$ forms a $\P$-mutation pair and $({^{\perp}\P[>0]}, \P, \P[<0]^{\perp})$ is a $\langle \P\rangle$-localization triple by the proof of \cite[Proposition 3.2]{Iyama-Yang}. Thus by Corollary \ref{cor:Verdier vs IY}, we have

\begin{corollary} \label{Iyama-Yang's result} $($\cite[Theorem 3.6]{Iyama-Yang}$)$ There is a triangle equivalence $\mathcal{Z}/[\P]\stackrel{\sim}\hookrightarrow\T/\langle\P\rangle$.
\end{corollary}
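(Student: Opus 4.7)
The plan is to invoke Corollary \ref{cor:Verdier vs IY} with the choices $\U = {^{\perp}\P[>0]}$, $\V = \P[<0]^{\perp}$ and $\X = \P$, so that $\U \cap \V = \mathcal{Z}$ holds by the very definition of $\mathcal{Z}$. The two hypotheses demanded by that corollary, namely that $(\mathcal{Z}, \mathcal{Z})$ be a $\P$-mutation pair and that $({^{\perp}\P[>0]}, \P, \P[<0]^{\perp})$ be a $\langle\P\rangle$-localization triple, are precisely what the paragraph preceding the statement already records (via the proof of \cite[Proposition 3.2]{Iyama-Yang}). Applying Corollary \ref{cor:Verdier vs IY} to these data then directly delivers the triangle equivalence $\mathcal{Z}/[\P] \stackrel{\sim}\hookrightarrow \T/\langle\P\rangle$.

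The only prerequisites of Corollary \ref{cor:Verdier vs IY} that one might want to check separately are the direct-summand-closures of $\P$ and $\mathcal{Z}$: for $\P$ this is a standing hypothesis on the presilting subcategory, while $\mathcal{Z}$ inherits the property from the fact that both ${^{\perp}\P[>0]}$ and $\P[<0]^{\perp}$ are kernels of representable functors, hence closed under summands. Beyond these trivialities, no further verification is needed at the level of Corollary \ref{cor:Verdier vs IY}, since the $\P$-mutation pair structure on $(\mathcal{Z},\mathcal{Z})$ already implies that $\mathcal{Z}$ is Frobenius special $\P$-monic closed by \cite[Example 6.4(ii), Proposition 6.5]{ZWLi2}, and Corollary \ref{thm:FrobeniusVerdier} then does the rest.

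If one wished to unpack the invocation of \cite[Proposition 3.2]{Iyama-Yang}, the key ingredients would be as follows. Condition (P1) supplies, for every $A \in \T$, the $\P$-preenvelopes and $\P$-precovers that produce both the mutation triangles for objects of $\mathcal{Z}$ and the defining approximation triangles of Definition \ref{htriple}(a),(b). Condition (P2) is then what forces the iterated approximation process to terminate after finitely many steps, so that the resulting complements $W_A$ and $W^A$ in fact lie in $\langle\P\rangle$; I expect this termination step to be the only genuinely technical point, since one must control the ranges in which $\Hom_\T(A,\P[i])$ and $\Hom_\T(\P,A[i])$ vanish uniformly along the induction. The remaining conditions of Definition \ref{defn:nlocaltrip}, in particular the identification $\P = {^{\perp}\P[>0]} \cap \P[<0]^{\perp} \cap \langle\P\rangle$ together with the inclusions $\U^{\perp_{\T/[\P]}},\, {^{\perp_{\T/[\P]}}\V} \subseteq \langle\P\rangle$, then follow routinely from the presilting hypothesis combined with (P2).
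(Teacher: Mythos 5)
Your proposal matches the paper's own argument: both set $\U = {^{\perp}\P[>0]}$, $\V = \P[<0]^{\perp}$, $\X = \P$, cite the preceding paragraph (via \cite[Proposition 3.2]{Iyama-Yang}) for the $\P$-mutation pair structure on $(\mathcal{Z},\mathcal{Z})$ and the $\langle\P\rangle$-localization triple, and then invoke Corollary~\ref{cor:Verdier vs IY}. The extra remarks you add on direct-summand closure and on unpacking conditions (P1)--(P2) are harmless elaboration rather than a different route.
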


The above corollary contains Buchweitz' equivalence \cite[Theorem 4.4.1(2)]{Buchweitz87}. In fact, assume that $\T=\mathrm{D^b}(R)$ is the bounded derived category of a Gorenstein ring $R$, and $\P=\mathrm{add}R$ is the subcategory of finitely generated projective right $R$-modules. Then $\mathcal{Z}$ is the subcategory of Gorenstein projective right $R$-modules.

\subsection*{A theorem of Wei}
Let $(\T, [1], \Delta)$ be a triangulated category. Let $\omega$ be a semi-selforthogonal subcategory of $\T$. Given a subcategory $\C$ of $\T$, we define the subcategory $\widehat{\C}$ as the class of all objects $T$ such that there are triangles $T_{i+1}\to C_i\to T_i\to T_{i+1}[1]$ in $\Delta$ for some $r\geq 0$ and all $0 \leq i\leq r$ satisfying $T_0=T, T_{r+1}=0$ and $C_i\in \C$ for each $i$. The subcategory $\widecheck{\C}$ is defined dually. Let $\omega$ be a subcategory of $\T$. We use ${\rm add} \omega$ to denote the class of all direct summands of finite direct sums of copies of objects in $\omega$. Let $\X_\omega$ be the subcategory of $\T$ consisting of all objects $T$ satisfying that there are triangles $T_i\to M_i\to T_{i+1}\to T_i[1]$ in $\Delta$ for all $i\geq 0$, such that $T_0=T$ and for all $i>0$, $M_i\in {\rm add}\omega$ and $\Hom_\T(T_i, \omega[j])=0$ for all $j>0$.  Dually, one can define the subcategory ${_{\omega}\X}$. Following \cite[Definition 2.4]{Wei2015}, an object in $\mathcal{G}_\omega={_{\omega}\X}\cap \X_\omega$ is called {\it $\omega$-Gorenstein}. Then $(\mathcal{G}_\omega, \mathcal{G}_{\omega})$ is an $\mathcal{\add \omega}$-mutation pair by \cite[Proposition 2.5(2)]{Wei2015}
. Moreover, if $\widehat{\X_\omega}=\T=\widecheck{_\omega\X}$, then $(\X_\omega, \add\omega,{_{\omega}\X})$ is an $\langle\add\omega\rangle$-localization triple  by the proof of \cite[Lemma 2.2 and Lemma 2.6(2)]{Wei2015}. Thus by Corollary \ref{cor:Verdier vs IY}, we have

 \begin{corollary}\label{wei's result}$($\cite[Theorem 2.7]{Wei2015}$)$ \ There is a triangle equivalence $\mathcal{G}_{\omega}/[\add\omega] \stackrel{\sim}\hookrightarrow\T/\langle\add\omega\rangle$.
 \end{corollary}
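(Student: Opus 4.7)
The strategy is to apply Corollary \ref{cor:Verdier vs IY} with $\C=\mathcal{G}_\omega$ and $\X=\add\omega$, taking $\mathcal{N}=\langle\add\omega\rangle$, $\U=\X_\omega$, $\V={_\omega}\X$. Since $\mathcal{G}_\omega=\X_\omega\cap{_\omega}\X$ by definition and $(\mathcal{G}_\omega,\mathcal{G}_\omega)$ is an $\add\omega$-mutation pair by \cite[Proposition 2.5(2)]{Wei2015}, the only nontrivial task is to check that $(\X_\omega,\add\omega,{_\omega}\X)$ is an $\langle\add\omega\rangle$-localization triple in $\T$ in the sense of Definitions \ref{htriple} and \ref{defn:nlocaltrip}. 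Once this is in place, Corollary \ref{cor:Verdier vs IY} provides the triangle equivalence $\mathcal{G}_\omega/[\add\omega]\xrightarrow{\sim}\T/\langle\add\omega\rangle$.

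To verify the localization triple conditions, I would first use the hypothesis $\widehat{\X_\omega}=\T$. Concretely, for each $T\in\T$, one can iterate the defining triangles of $\widehat{\X_\omega}$ to produce a triangle $T[-1]\to W_T\to Q(T)\xrto{r_T} T$ with $Q(T)\in\X_\omega$ and $W_T\in\widecheck{\add\omega}$; the Ext-vanishing built into $\X_\omega$ combined with $\omega$ being semi-selforthogonal forces $r_T$ to be an $\X_\omega$-precover and $W_T\in\X_\omega^{\perp_{\T/[\add\omega]}}$. Dually, the assumption $\widecheck{_\omega\X}=\T$ yields triangles $T\xrto{j^T}R(T)\to W^T\to T[1]$ with $R(T)\in{_\omega}\X$, $j^T$ a ${_\omega}\X$-preenvelope, and $W^T\in{^{\perp_{\T/[\add\omega]}}{_\omega}\X}$, giving conditions (a) and (b) of Definition \ref{htriple}. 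Condition (c) follows from the observation that when $T\in{_\omega}\X$ the object $Q(T)$ lies in $\X_\omega\cap{_\omega}\X=\mathcal{G}_\omega$ because $W_T\in\widecheck{\add\omega}\subseteq{_\omega}\X$ and ${_\omega}\X$ is closed under extensions; symmetrically $R(T)\in\mathcal{G}_\omega$ when $T\in\X_\omega$.

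For the $\mathcal{N}$-localization conditions, I would check the two items of Definition \ref{defn:nlocaltrip}. The equality $\add\omega=\X_\omega\cap{_\omega}\X\cap\langle\add\omega\rangle$ uses the standard Gorenstein yoga: an $\omega$-Gorenstein object that lies in $\langle\add\omega\rangle$ must already belong to $\add\omega$, since successively breaking apart the triangles in the resolutions defining $\X_\omega$ and $_\omega\X$ shows all syzygies are in $\add\omega$. Both $W_T\in\widecheck{\add\omega}$ and $W^T\in\widehat{\add\omega}$ manifestly lie in $\langle\add\omega\rangle$, and more generally $\X_\omega^{\perp_{\T/[\add\omega]}}\subseteq\widecheck{\add\omega}\subseteq\langle\add\omega\rangle$ (respectively, ${^{\perp_{\T/[\add\omega]}}{_\omega}\X}\subseteq\widehat{\add\omega}\subseteq\langle\add\omega\rangle$). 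Thus $(\X_\omega,\add\omega,{_\omega}\X)$ is an $\langle\add\omega\rangle$-localization triple and the previous paragraph supplies the assumption $\U\cap\V=\mathcal{G}_\omega$ needed by Corollary \ref{cor:Verdier vs IY}.

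The main obstacle is condition (c) of Definition \ref{htriple}, together with the identification $\X_\omega\cap{_\omega}\X\cap\langle\add\omega\rangle=\add\omega$. The former is the place where the interaction between the two resolving subcategories $\X_\omega$ and ${_\omega}\X$ is really used, and the latter is precisely the Gorenstein cancellation principle that underlies the whole theory. Everything else is a mechanical verification extracted from \cite[Lemmas 2.2, 2.6]{Wei2015}, which is the reason Corollary \ref{wei's result} becomes essentially a one-line consequence of Corollary \ref{cor:Verdier vs IY} once these ingredients are recorded.
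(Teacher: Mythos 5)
Your proposal takes exactly the same route as the paper: choose $\U=\X_\omega$, $\V={_\omega\X}$, $\X=\add\omega$, $\mathcal{N}=\langle\add\omega\rangle$, note that $(\mathcal{G}_\omega,\mathcal{G}_\omega)$ is an $\add\omega$-mutation pair by \cite[Proposition 2.5(2)]{Wei2015} and that $(\X_\omega,\add\omega,{_\omega\X})$ is an $\langle\add\omega\rangle$-localization triple by \cite[Lemmas 2.2 and 2.6(2)]{Wei2015}, and feed this into Corollary \ref{cor:Verdier vs IY}. The one small slip in your sketch is a hat/check inversion: following the Auslander--Buchweitz pattern, the third terms $W_T$ of the special $\X_\omega$-precover triangles land in $\widehat{\add\omega}$ (not $\widecheck{\add\omega}$), with the containment $\widehat{\add\omega}\subseteq{_\omega\X}$ supplying the extension-closure step in condition (c), and dually $W^T\in\widecheck{\add\omega}\subseteq\X_\omega$; since both bounded classes sit inside $\langle\add\omega\rangle$, the conclusion is unchanged.
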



\vskip10pt

\end{document}